\pgfplotsset{compat=1.11}
\theoremstyle{definition}
\newtheorem{definition}{Definition}
\theoremstyle{plain}
\newtheorem{theorem}[definition]{Theorem}
\theoremstyle{plain}
\theoremstyle{plain}
\theoremstyle{plain}
\theoremstyle{definition}
\newtheorem{remark}[definition]{Remark}
\theoremstyle{plain}
\newtheorem{lemma}[definition]{Lemma}
\theoremstyle{plain}
\newtheorem{proposition}[definition]{Proposition}
\theoremstyle{plain}
\newtheorem{corollary}[definition]{Corollary}
\theoremstyle{definition}
\theoremstyle{definition}
\DeclareMathOperator{\di}{div}
\DeclareMathOperator{\diam}{diam}
\DeclareMathOperator{\graph}{graph}
\DeclareMathOperator{\Ric}{Ric}
\newcommand{\R}{\mathbb{R}}
\newcommand{\Reals}{\mathbb{R}}
\newcommand{\lef}{\left(}
\newcommand{\rig}{\right)}
\newcommand{\ilmanen}{\lef \R^{n+1}, e^{\frac{2}{n}x_{n+1}}\delta_{ij}\rig}
\newcommand{\mcv}{\boldsymbol{H}}
\newcommand{\un}{\nu}
\newcommand{\sbase}{e}
\newcommand{\oframe}{E}
\DeclareMathOperator{\conv}{Conv}
\newcommand{\fracsm}[2]{\begin{matrix}\frac{#1}{#2}\end{matrix}}
\newcommand{\beq}{\begin{equation}}
\newcommand{\eeq}{\end{equation}}
\newcommand{\spand}{{\operatorname{span}}}
\newcommand{\subRmany}{_{\R^{n+1}}\hspace{-1pt}}
\newcommand{\supRmany}{^{\R^{n+1}}\hspace{-1pt}}
\newcommand{\Rfew}{\R^{n}}
\newcommand{\Rmany}{\R^{n+1}}
\DeclareMathOperator{\tr}{tr}
\DeclareMathOperator{\dist}{dist}
\DeclareMathOperator{\Hess}{Hess}
\begin{document} 

\date{\today}

\title[bi-halfspace and convex hull theorems]{bi-halfspace and convex hull\\theorems for translating solitons}

\author{Francesco Chini\\}
\author{\\Niels Martin M\o{}ller}
\address{Francesco Chini, Department of Mathematical Sciences, Copenhagen University.}
\email{chini@math.ku.dk}

\address{Niels Martin M\o{}ller, Department of Mathematical Sciences, Copenhagen University.}
\email{nmoller@math.ku.dk}
\thanks{Francesco Chini was partially supported by the Villum Foundation's QMATH Centre. Niels Martin M\o{}ller was partially supported by the DFF Sapere Aude-Research Leader grant from The Independent Research Fund Denmark (Danish Ministry of Science), and by the United States National Science Foundation grant DMS-1311795.}
\keywords{Mean curvature flow, curvature flows, self-translaters, self-similarity, solitons, minimal surfaces, Omori-Yau principle, maximum principles, nonlinear partial differential equations.}

\begin{abstract}
While it is well known from examples that no interesting ``halfspace theorem'' holds for properly immersed  $n$-dimensional self-translating mean curvature flow solitons in Euclidean space $\mathbb{R}^{n+1}$, we show that they must all obey a general ``bi-halfspace theorem'' (aka ``wedge theorem''): Two transverse vertical halfspaces can never contain the same such hypersurface.  The same holds for any infinite end.  The proofs avoid the typical methods of nonlinear barrier construction for the approach via distance functions and the Omori-Yau maximum principle.

As an application we classify the convex hulls of all properly immersed  (possibly with compact boundary) $n$-dimensional mean curvature flow self-translating solitons $\Sigma^n$ in $\R^{n+1}$, up to an orthogonal projection in the direction of translation. This list is short, coinciding with the one given by Hoffman-Meeks in 1989, for minimal submanifolds: All of $\R^{n}$, halfspaces, slabs, hyperplanes and convex compacts in $\R^{n}$.

\end{abstract}

\maketitle
\section{Introduction}
The mean curvature flow for hypersurfaces in Euclidean space has been studied systematically since the late 1970s (to name but a few, see \cite{temam}, \cite{brakke}, \cite{huisken}, \cite{gage-hamilton}, \cite{grayson}, \cite{hamilton95}, \cite{white}, \cite{cm-mcf}, \cite{cm12}, and for early work on curve shortening flow \cite{mullins}), with considerable emphasis on the singularity models for the flow: the self-similar solitons.  

The oldest known nontrivial complete embedded soliton is Calabi's self-translating curve in $\R^2$, also sometimes called the ``grim reaper'' translating soliton (see Grayson \cite{grayson} and also \cite{mullins}, where it seems to have been first found). For readers more familiar with the Ricci flow, the most analogous object there would be Hamilton's cigar soliton (see \cite{hamilton}, and recall G. Perelman's central ``no cigar'' theorem \cite{perelman}).


Self-translaters arise in the study of the so-called  ``Type II'' singularities of the mean curvature flow. Indeed, using a classical result of Hamilton contained in \cite{hamilton95}, Huisken and Sinestrari \cite{HS99a} showed that   blow-up limit flows at Type II singularities of mean convex mean curvature flows are complete, self-translaters of the kind $\R^{n-k} \times \Sigma^k$, where $\Sigma^k$ is a convex translater in $\R^{k+1}$, with $k = 1, \dots, n$. For the mean convex case see also  \cite{HS99b}, \cite{Wh00}, \cite{Wh03} and \cite{HK17}.
If we remove the mean convexity hypothesis, it is known that blow-ups at Type II singularities must be eternal flows, but, to our knowledge, it is still not known whether these eternal flows are generally self-translaters.
(See Chapter 4 in \cite{mantegazza}.)


In the classical subject of minimal surfaces one of the cornerstones of the modern theory is the so-called ``Halfspace Theorem'' and convex hull classification, proven in 1989 by Hoffman and Meeks \cite{hoffman-meeks}. Numerous other authors have written about such halfspace theorems and convex hull properties, in various contexts: See f.ex. \cite{Xa84}, \cite{meeks-rosenberg}, \cite{bess}, \cite{meeks-rosenberg-again}, \cite{haus}, \cite{earp} and \cite{ro-schu-spru}.

In the literature, there are some results at the intersection of these two topics, of solitons and halfspace theorems. For instance in \cite{wei_wylie} (see also \cite{petersen}) there are some results for $f$-minimal hypersurfaces for the case of $\Ric_f>0$, including a halfspace theorem for one important class of mean curvature solitons, the self-shrinkers (see also \cite{PR14}).  The paper \cite{CE16} also showed a halfspace theorem (by using the half-catenoid-like ``self-shrinking trumpets'' from \cite{steve-niels} as barriers) and \cite{impiri} showed a ``Frankel property'' for self-shrinkers (meaning: when it so happens that all minimal surfaces in a space must intersect, as in \cite{frankel} and \cite{petersen}). Additionally, for self-translaters, a few significant geometric classification and nonexistence results are now known, see \cite{xj_wang}, \cite{sha}, \cite{halihaj}, \cite{niels}, \cite{hasl}, \cite{jpg}, \cite{imp_rim}, \cite{bueno} and \cite{himw}, but these do not directly address the question of (bi-)halfspace and convex hull properties.

One good reason for the lack of results with a (bi-)halfspace theorem flavor in the case of self-translaters would likely be that the most naive results one might imagine are wrong: F.ex. vertical planes and grim reaper cylinders readily coexist as self-translating solitons without ever intersecting, so there is no easy general ``halfspace theorem'' nor any ``Frankel property''.
 Moreover the typical arguments employed often rely on constructing barriers. As discussed in the Appendix, a strategy using other exact solutions to the translater equation does not seem readily available here, except in the case of 2-dimensional surfaces in $\R^3$.

In the present paper we will present the following three main contributions on $n$-dimensional mean curvature flow self-translating solitons (also known as ``translaters'', ``self-translaters'', ``translators'' or ``self-translators'') in $\R^{n+1}$. We assume in the below that the translation direction is $e_{n+1}$.

\begin{theorem}[Bi-Halfspace Theorem]\label{bi-halfspace}
There does not exist any properly immersed self-translating $n$-dimensional hypersurface $\Sigma^n\subseteq\Rmany$,  without boundary, which is contained in two transverse vertical halfspaces of $\Rmany$.
\end{theorem}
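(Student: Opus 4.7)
The plan is to argue by contradiction via a drift-Laplacian version of the Omori--Yau maximum principle. Assume $\Sigma\subseteq H_1\cap H_2$ with $H_i=\{\langle x,v_i\rangle\geq c_i\}$ and $v_1,v_2\in\R^n\times\{0\}$ linearly independent; after a horizontal translation one takes $c_i=0$ and writes $u_i:=\langle\cdot,v_i\rangle|_\Sigma\geq 0$. Since any vertical hyperplane is itself a self-translater and self-translaters are minimal hypersurfaces in the Ilmanen metric $e^{2x_{n+1}/n}\delta_{ij}$, the strong maximum principle for minimal hypersurfaces rules out interior tangency of $\Sigma$ with either $\partial H_i$: such tangency would force $\Sigma$ to coincide with $\partial H_i$, contradicting transversality. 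Hence $u_1,u_2>0$ strictly on $\Sigma$.

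Next, introduce the drift Laplacian $L:=\Delta_\Sigma+\langle e_{n+1},\nabla^\Sigma\cdot\rangle$. A direct computation using $\vec H=(e_{n+1})^\perp$ yields $L\langle\cdot,v\rangle=\langle e_{n+1},v\rangle$ for every $v\in\Rmany$, so $u_1,u_2$ are positive $L$-harmonic functions. Let $\Pi:=\spand(v_1,v_2)$ and let $\theta$ be the polar angle in $\Pi$ measured from $\partial H_1$. Transversality gives $\theta|_\Sigma\in(0,\alpha)$ with wedge angle $\alpha<\pi$. Working in orthonormal polar coordinates in $\Pi$, one computes $L\theta=-\Hess\theta(\nu,\nu)=2(\nu\cdot\hat r)(\nu\cdot\hat\theta)/r^2$, and therefore the bounded function $\phi:=\sin(\pi\theta/\alpha)\in(0,1]$ satisfies
\[
L\phi \;=\; \tfrac{\pi}{\alpha}\cos\!\lef\tfrac{\pi\theta}{\alpha}\rig L\theta \;-\; \tfrac{\pi^2}{\alpha^2}\,\phi\,|\nabla^\Sigma\theta|^2.
\]
Since $\Sigma$ is properly immersed with $|\vec H|\leq 1$, the weak Omori--Yau principle holds for $L$ on $\Sigma$ in the style of Pigola--Rigoli--Setti. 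Applied at $\sup\phi$, this produces a sequence $(p_k)\subset\Sigma$ with $\phi(p_k)\to\sup\phi$, $|\nabla^\Sigma\phi(p_k)|\to 0$, and $\limsup L\phi(p_k)\leq 0$. The strong maximum principle rules out an interior supremum -- such a supremum would force $\Sigma$ to coincide with the half-hyperplane through the edge at angle $\alpha/2$, contradicting the absence of boundary together with properness -- so the Omori--Yau sequence must escape to infinity, and a case analysis on whether $r(p_k)\to\infty$ or the Gauss map degenerates is intended to deliver the contradiction.

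The principal obstacle is this last sign analysis: both terms in the displayed formula for $L\phi$ can vanish in the limit simultaneously, for instance when $\nu\to\pm\hat\theta$ (making $L\theta$ and $|\nabla^\Sigma\theta|$ both small) or when $r(p_k)\to\infty$. To overcome this one would couple $\phi$ with the drift-harmonic pair $u_1,u_2$, or with a proper auxiliary function on $\Sigma$, to force a definite sign at the Omori--Yau sequence. An attractive alternative is to reformulate the problem entirely in the Ilmanen manifold $(\Rmany,e^{2x_{n+1}/n}\delta_{ij})$, where $\Sigma$ is a genuine minimal hypersurface in a wedge whose walls $\partial H_i$ are totally geodesic, and to attempt a Hoffman--Meeks-style wedge argument in this weighted ambient.
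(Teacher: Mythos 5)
You correctly diagnose the gap in your own argument, and it is fatal as written: near $\sup\phi$ one has $\theta\to\alpha/2$, so $\cos\bigl(\tfrac{\pi\theta}{\alpha}\bigr)\to 0$ and the $L\theta$ contribution to $L\phi$ disappears, while the remaining term $-\tfrac{\pi^2}{\alpha^2}\phi\,|\nabla^\Sigma\theta|^2$ is nonpositive. Thus $\limsup L\phi(p_k)\le 0$ is exactly what Omori--Yau already promises, and no contradiction emerges. The intermediate strong-maximum-principle step is also unjustified: $\phi$ is neither $L$-sub- nor $L$-superharmonic on $\Sigma$ (the Hessian of the polar angle has eigenvalues $\pm 1/r^2$, so $L\theta=-\Hess\theta(\nu,\nu)$ has no fixed sign), and the bisecting half-hyperplane is not itself a translator, so an interior supremum of $\phi$ would not force coincidence with it. The structural problem is that the angle $\theta$ simply does not carry a sign-definite trace of its ambient Hessian over $T_p\Sigma$, so no smooth function of $\theta$ alone can deliver the strictly positive limiting Laplacian one needs at an Omori--Yau sequence.

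The paper's proof replaces the angle with a function that does have this sign-definiteness. It takes the Euclidean distance $d_R$ to a vertical affine $(n-1)$-plane $\mathscr{L}_R$ sitting on the bisector of the wedge at scale $R$, then truncates to get a bounded $C^2$-where-it-matters function $f$ on $\Sigma$. The crucial point is that $\Hess_{\Rmany}d_R$ has exactly one nonzero eigenvalue, namely $1/d_R>0$, with unit eigenvector $\chi$ tangent to the cylinders $\{d_R=\mathrm{const}\}$ and with $\nabla^{\Rmany}d_R$ and $e_{n+1}$ both in the kernel. Tracing over $T_p\Sigma$ gives $\tr_\Sigma\Hess d_R = (1-\langle\chi,\nu\rangle^2)/d_R$, while the translator equation contributes the drift term $-\langle\nabla^{\Rmany}d_R,\nu\rangle\langle e_{n+1},\nu\rangle$. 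At an Omori--Yau sequence for $f$, $\|\nabla^\Sigma f\|\to 0$ forces $\nu\to\pm\nabla^{\Rmany}d_R$; since $\nabla^{\Rmany}d_R\perp\chi$ and $\nabla^{\Rmany}d_R\perp e_{n+1}$, both $\langle\chi,\nu\rangle^2$ and the drift term vanish in the limit, and one is left with $\lim\Delta_\Sigma f=\lim 1/d_R\ge\xi/R>0$, contradicting $\limsup\Delta_\Sigma f\le 0$. So the geometric input you are missing is a replacement of the polar angle by a function whose ambient Hessian has a one-dimensional positive eigenspace that is simultaneously orthogonal to the soliton direction and to the function's own gradient; that is what turns the "can-vanish" terms in your computation into "must-vanish-leaving-a-definite-residue" terms in theirs.
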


\begin{theorem}[Bi-Halfspace Theorem w/ Compact Boundary]\label{bi-halfspace_boundary}
Suppose a properly immersed  connected self-translating $n$-dimensional hypersurface $(\Sigma^n,\partial\Sigma)$ in $\Rmany$ is contained in two transverse vertical halfspaces of $\Rmany$. If $\partial \Sigma$ is compact then $\Sigma$ is compact. 
\end{theorem}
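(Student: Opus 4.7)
The plan is to reduce Theorem \ref{bi-halfspace_boundary} to the ``infinite end'' strengthening of Theorem \ref{bi-halfspace} advertised in the abstract. I would argue by contradiction: assume $\Sigma$ is non-compact, extract an unbounded connected component of $\Sigma$ lying outside a large ball, and recognize this component as an infinite end of a translater contained in the same two transverse vertical halfspaces as $\Sigma$, thereby violating the end-version of the bi-halfspace theorem.

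To carry out the reduction, since $\partial \Sigma$ is compact, choose $R>0$ with $\partial \Sigma \subset B_R(0)$. By properness of the immersion, $\Sigma \cap \overline{B_R(0)}$ is compact. If every connected component of $\Sigma \setminus \overline{B_R(0)}$ were bounded in $\R^{n+1}$, then properness would make each relatively compact in $\Sigma$, whence $\Sigma$ itself would be compact, contrary to assumption. Let $\Sigma'$ be an unbounded component; it is a properly immersed, connected self-translater, unbounded in $\R^{n+1}$, with boundary contained in the compact set $\Sigma \cap \partial B_R(0)$, and still contained in $H_1 \cap H_2$, where $H_1, H_2$ are the two transverse vertical halfspaces containing $\Sigma$. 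By the end-version of Theorem \ref{bi-halfspace}, such a $\Sigma'$ cannot exist, which is the desired contradiction.

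The main obstacle is of course the end-version itself: Theorem \ref{bi-halfspace} is stated for surfaces without boundary, whereas the end $\Sigma'$ carries a compact boundary on $\partial B_R(0)$. I expect the strategy to parallel that of Theorem \ref{bi-halfspace}, which (as the abstract signals) rests on distance functions and the Omori--Yau maximum principle rather than barriers. For any $w$ in the open cone generated by the horizontal normals $v_1, v_2$ of $H_1, H_2$, the horizontal coordinate $u = \langle x, w \rangle$ is bounded above on $\Sigma'$ and is harmonic for the drift Laplacian $L := \Delta_{\Sigma} + \langle \nabla^{\Sigma}(\,\cdot\,),\, (e_{n+1})^{\top}\rangle$ naturally associated with the translater equation $\vec{H} = (e_{n+1})^{\perp}$. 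A version of the Omori--Yau maximum principle for $L$, robust enough to place a supremum-approximating sequence for $u$ deep in the interior of $\Sigma'$ (away from the compact boundary on $\partial B_R(0)$), should then force $w^{\top} \to 0$, and hence $\nu \to \pm w/|w|$, along such a sequence; the resulting geometric incompatibility (across a family of choices of $w$ in the cone) is what I expect to yield the contradiction.
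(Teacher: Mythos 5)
Your proposed reduction is, at bottom, circular. Cutting $\Sigma$ along a large sphere and extracting an unbounded component $\Sigma'$ with compact boundary in $\partial B_R(0)$ just restates the theorem in contrapositive form; indeed, the ``end-version'' you want to invoke is precisely Corollary~\ref{corollary_ends}, which the paper derives \emph{from} Theorem~\ref{bi-halfspace_boundary} as an immediate consequence. So the burden of proof is not lessened at all -- you still need to rule out a noncompact, properly immersed translater with compact boundary inside a transverse vertical wedge, which is exactly the original statement. (The decomposition argument itself, by the way, also needs a word: to guarantee only finitely many components outside $\overline{B_R(0)}$ you should choose $R$ to be a regular value via Sard's theorem; otherwise one cannot immediately conclude from boundedness of each component to compactness of $\Sigma$.)

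The sketch you offer for the ``end-version'' has a genuine structural gap, beyond the small sign issue (for $w$ in the cone generated by $w_1,w_2$, $u=\langle x,w\rangle$ is bounded \emph{below} on the wedge, not above). Your function $u$ is linear, so $\operatorname{Hess}_{\Reals^{n+1}} u = 0$ and, as you yourself compute, $Lu = \Delta_\Sigma u + \langle\nabla^\Sigma u, e_{n+1}^\top\rangle = 0$. An Omori--Yau sequence $p_k$ for $\pm u$ then yields $\nabla^\Sigma u(p_k)\to 0$ and $\Delta_\Sigma u(p_k)\to 0$, which is perfectly consistent with $\limsup_k \Delta_\Sigma u(p_k)\le 0$. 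There is no contradiction to extract: the vanishing gradient tells you $\nu\to\pm w$ along \emph{that} sequence, but different $w$ give different sequences, and no ``geometric incompatibility'' between them is forced. The engine of the paper's proof of Theorem~\ref{bi-halfspace} is completely different: one uses the \emph{distance} $d_R$ to a vertical $(n-1)$-dimensional affine locus $\mathscr{L}_R$, whose ambient Laplacian $\Delta_{\Reals^{n+1}} d_R = 1/d_R$ contributes a strictly positive term that survives on the Omori--Yau sequence (after the translater term is shown to vanish via $\nu\to\nabla^{\Reals^{n+1}} d_R$); this positivity is exactly what violates the Omori--Yau inequality, and a linear function simply cannot supply it.

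Even granting some replacement for the core Omori--Yau step, you have not addressed how to keep the approximating sequence away from the compact boundary. The paper's proof of Theorem~\ref{bi-halfspace_boundary} is a two-case argument: if $\Sigma\cap\mathcal{V}_R$ is bounded for every $R$, one slides a grim reaper cylinder in a slab separating $\partial\Sigma$ from $\mathscr{D}_R$ and contradicts separating tangency (Lemma~\ref{tangency_principle}); if some $\Sigma\cap\mathcal{V}_R$ is unbounded, one modifies $f=d_R$ by an ``adiabatic'' cutoff $M\chi_\ell$ so the new function's supremum strictly exceeds its boundary values, and then one either reuses the original argument (when the modified Omori--Yau points escape to infinity) or extracts interior maxima $q_\ell$ with $\|q_\ell\|\ge\ell$, whose gradient and Laplacian contributions from $\chi_\ell$ are $O(1/\ell)$. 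None of this machinery, which is precisely where the compact-boundary case differs from the boundaryless case, appears in your proposal. As it stands the proposal does not constitute a proof.
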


In the next theorem we let $\pi \colon \R^{n+1} \to \R^n$ be the projection in the direction of translation
$\pi\lef x_1, \dots, x_n, x_{n+1} \rig = ( x_1, \dots, x_n)$.

\begin{theorem}[Convex Hull Classification]\label{convex_hull_noncomp_trans}
Let $(\Sigma^n,\partial \Sigma)$ be a properly immersed connected self-translater in $\R^{n+1}$,  with (possibly empty) compact boundary $\partial \Sigma$.

Then exactly one of the following holds.
\begin{enumerate}
\item $\conv (\pi( \Sigma)) = \R^n$,
\item $\conv (\pi ( \Sigma ) )$ is a halfspace of $\R^n$,
\item $\conv (\pi( \Sigma)) $ is a closed slab between two parallel hyperplanes  of $ \R^n$,
\item $\conv (\pi( \Sigma)) $ is a hyperplane in $ \R^n$,
\item $\conv (\pi( \Sigma)) $ is a compact convex set. This case occurs precisely when $\Sigma$ is compact.
\end{enumerate}
\end{theorem}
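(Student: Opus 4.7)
The plan is to reduce the classification to the bi-halfspace theorem with boundary (Theorem \ref{bi-halfspace_boundary}) via the fact that any closed convex set is the intersection of the closed halfspaces containing it. First I would set $C := \overline{\conv(\pi(\Sigma))} \subseteq \R^n$ and record the key correspondence: for any horizontal $v \in \R^n \setminus \{0\}$ and any $c \in \R$, $\Sigma$ lies in the vertical halfspace $\{x \in \R^{n+1} : v \cdot \pi(x) \le c\}$ if and only if $C$ lies in the halfspace $\{y \in \R^n : v \cdot y \le c\}$; moreover two vertical halfspaces are transverse precisely when their horizontal normals are linearly independent. Hence $\Sigma$ lies in two transverse vertical halfspaces of $\R^{n+1}$ if and only if $C$ is contained in two closed halfspaces of $\R^n$ with non-parallel boundaries.

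If $C = \R^n$ we are in case~(1); otherwise $C$ is a proper closed convex subset of $\R^n$ and equals the intersection of the closed halfspaces of $\R^n$ containing it. I would then bifurcate on whether two such halfspaces can be chosen to be transverse. If not, all closed halfspaces containing $C$ share a single normal direction; after a rotation, $C = \{a \le x_n \le b\}$ with $a := \inf_{y \in C} y_n \in [-\infty, +\infty)$ and $b := \sup_{y \in C} y_n \in (-\infty, +\infty]$ and $(a, b) \ne (-\infty, +\infty)$, yielding a hyperplane if $a = b$ (case~(4)), a closed slab if $-\infty < a < b < +\infty$ (case~(3)), or a halfspace if exactly one of $a, b$ is finite (case~(2)). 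If instead $C$ is contained in two transverse halfspaces, the correspondence places $\Sigma$ in two transverse vertical halfspaces; since $\partial \Sigma$ is compact, Theorem \ref{bi-halfspace_boundary} forces $\Sigma$ to be compact, putting us in case~(5).

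To close the argument I would verify the biconditional in case~(5). One direction is immediate: $\Sigma$ compact implies $\pi(\Sigma)$, and hence $C$, is compact. Conversely, if $C$ is compact then it is bounded and in particular contained in two transverse halfspaces (e.g. $\{y_1 \le R\} \cap \{y_2 \le R\}$ for $R$ large), so Theorem \ref{bi-halfspace_boundary} again forces $\Sigma$ to be compact. This biconditional also ensures the five cases are mutually exclusive in the sense stated.

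The main obstacle is essentially already encapsulated in Theorem \ref{bi-halfspace_boundary}: given that result, what remains is the classical convex-geometric observation that a proper closed convex subset of $\R^n$ whose supporting closed halfspaces all share a single normal direction must be of the product form $\R^{n-1} \times I$ for some closed convex $I \subseteq \R$, combined with the correspondence above.
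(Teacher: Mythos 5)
Your proposal is correct and takes essentially the same route as the paper: express the (closed) convex hull of $\pi(\Sigma)$ as an intersection of halfspaces, then use the correspondence between halfspaces in $\R^n$ containing $\conv(\pi(\Sigma))$ and vertical halfspaces in $\R^{n+1}$ containing $\Sigma$, and invoke Theorem~\ref{bi-halfspace_boundary} to rule out two transverse such halfspaces unless $\Sigma$ is compact; the remaining cases (all supporting halfspaces sharing a normal) give the trichotomy hyperplane/slab/halfspace. The one small divergence is that you establish the Case~(5) biconditional (compact $\conv(\pi(\Sigma))$ implies $\Sigma$ compact) directly from Theorem~\ref{bi-halfspace_boundary}, whereas the paper cites its separate Lemma~\ref{lemma_ceiling} (a barrier argument with winglike solitons) for that step — your route is slightly more economical, needing one fewer auxiliary lemma.
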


\begin{remark}
From examples (see below) there appears to be no hope of classifying any of the likely wild classes $\Sigma$, $\conv(\Sigma)$ or $\pi(\Sigma)$: Only after applying \emph{both} of the forgetful operations $\conv(\cdot)$ and $\pi(\cdot)$ do we find a short list, which in fact can be thought of plainly as ``vertical slabs'' (including their three degenerate cases).

Note also that  $\conv(\cdot)$ and $\pi(\cdot)$ can be freely switched in the statement of Theorem \ref{convex_hull_noncomp_trans}, because for any subset $\Omega \subseteq \R^{n+1}$ they commute:
$$
\conv\left( \pi \left( \Omega \right) \right) =\pi\left( \conv \left( \Omega \right) \right).
$$
\end{remark}

\begin{remark}
We note that each of the five cases of Theorem \ref{convex_hull_noncomp_trans} can happen, when $n\geq 2$, except possibly for Case (2). Leaving the case $n=1$ to the reader, let us list examples for each case, assuming $n\geq 2$ (see also the longer list of examples below at the end of Section \ref{sec:prelims}):
\begin{enumerate}

\item Take any rotationally symmetric $\Sigma^n$, e.g. the ``bowl'' translater.

\item  No examples appear to be known.
\item Take as $\Sigma^n$ a grim reaper cylinder or any in Ilmanen's $\Delta$-wing family.
\item Take as $\Sigma^n$ any vertical hyperplane of $\R^{n+1}$.
\item Take any compact subset of any of the known examples.
\end{enumerate}

\end{remark}



Observe that an immediate consequence of Theorem \ref{bi-halfspace_boundary} is  the following

\begin{corollary}\label{corollary_ends}(Ends)
Any  end of a properly immersed self-transating $n$-dimensional hypersurface $\Sigma $ cannot be contained in two transverse vertical halfspaces of $\R^{n+1}$. 
\end{corollary}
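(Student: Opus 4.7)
The plan is to derive the corollary directly from Theorem \ref{bi-halfspace_boundary} by a standard truncation argument, exploiting that an end is by definition non-compact.

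First, I would fix what is meant by an \emph{end} in the properly immersed setting: one takes a compact exhaustion $K_1\subseteq K_2\subseteq\cdots$ of $\Sigma$, and an end is a nested sequence $E_1\supseteq E_2\supseteq\cdots$ of unbounded connected components of $\Sigma\setminus K_i$. Equivalently, after choosing a sufficiently large compact set $K\subseteq \Sigma$, an end is represented by a single unbounded connected component $E$ of $\Sigma\setminus K$. Since $\Sigma$ is properly immersed and $K$ is compact, the closure $\overline{E}$ in $\Sigma$ is a properly immersed connected self-translating $n$-dimensional hypersurface; its boundary $\partial \overline{E}$ lies inside the boundary of $K$ in $\Sigma$ and is therefore compact.

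Next, I would argue by contradiction: suppose the end $E$ is contained in two transverse vertical halfspaces $H_1, H_2$ of $\R^{n+1}$. Since $H_1$ and $H_2$ are closed, the closure $\overline{E}$ is also contained in $H_1\cap H_2$. Thus $\overline{E}$ satisfies all the hypotheses of Theorem \ref{bi-halfspace_boundary}: it is a properly immersed connected self-translater with compact boundary, contained in two transverse vertical halfspaces. The theorem then forces $\overline{E}$ to be compact.

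This is the contradiction, because by construction the end $E\subseteq\overline{E}$ is unbounded in $\R^{n+1}$ (any end of a properly immersed submanifold leaves every compact set of the ambient space). Hence no end of $\Sigma$ can lie inside two transverse vertical halfspaces, which is exactly the statement of the corollary. The only subtlety worth double-checking is the compactness of $\partial \overline{E}$, which is where properness of the immersion is used essentially; aside from this, the argument is a one-step reduction to Theorem \ref{bi-halfspace_boundary} and presents no genuine obstacle.
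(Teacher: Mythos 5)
Your proof is correct and is precisely the argument the paper intends: it calls the corollary ``an immediate consequence'' of Theorem \ref{bi-halfspace_boundary} and gives no proof, and your write-up simply makes explicit the one-step reduction (truncate to a representative $E$ of the end with smooth compact boundary in $\partial K$, note $\overline{E}\subseteq H_1\cap H_2$, apply Theorem \ref{bi-halfspace_boundary} to conclude $\overline{E}$ is compact, contradicting unboundedness of the end, using properness to pass between intrinsic and extrinsic unboundedness). The only minor point you could add for completeness is that $K$ should be chosen with $\partial K$ transverse to $\Sigma$ so that $\overline{E}$ is genuinely a smooth manifold with compact boundary, but this is routine.
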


\begin{remark}
The compact boundary version in Theorem \ref{bi-halfspace_boundary} does not follow from any generally valid modification of the proof of Theorem \ref{bi-halfspace}: For other related ambient spaces it can happen that even a halfspace theorem is true and yet no bi-halfspace theorem holds for the compact boundary case. See f.ex. the halfspace theorem for self-shrinkers in \cite{CE16}, and note how the asymptotically conical self-shrinkers in \cite{steve-niels} can easily be cut to get such examples which are noncompact with compact boundary.
\end{remark}

Let us quickly note how this is (for $\partial\Sigma=\emptyset$) strictly stronger than the old Hoffman-Meeks result, so that in the process we get a new proof of this classical fact:

\begin{corollary}[Hoffman-Meeks: \cite{hoffman-meeks}]\label{corr-hm}
The classification (1)-(5) in Hoffman-Meeks's Theorem 2 (Theorem \ref{hoffman_meeks_theorem} below) holds true for properly immersed minimal hypersurfaces in $\Rmany$ without boundary.
\end{corollary}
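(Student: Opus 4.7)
The plan is to reduce this to Theorem \ref{convex_hull_noncomp_trans} via the standard cylinder trick. Let $M^n \subset \R^{n+1}$ be a properly immersed boundaryless minimal hypersurface; by treating connected components separately I may assume $M$ is connected. I will form the $(n+1)$-dimensional product cylinder
$$
\tilde{\Sigma} := M \times \R \ \subset\ \R^{n+1}\times\R = \R^{n+2},
$$
which is again properly immersed, connected and boundaryless, and take the last coordinate of $\R^{n+2}$ as the translation direction.

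The key observation is that $\tilde{\Sigma}$ is tautologically a self-translater in $\R^{n+2}$: since $M$ is minimal in $\R^{n+1}$, the cylinder $\tilde{\Sigma}$ is minimal in $\R^{n+2}$, so $\mcv \equiv 0$; and since $e_{n+2}$ is everywhere tangent to $\tilde{\Sigma}$, also $\langle \un, e_{n+2}\rangle \equiv 0$. Thus both sides of the translater equation vanish identically, and $\tilde{\Sigma}$ is a properly immersed, connected, boundaryless self-translater of dimension $n+1$ in $\R^{n+2}$.

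I will then apply Theorem \ref{convex_hull_noncomp_trans} directly to $\tilde{\Sigma}$: letting $\pi\colon \R^{n+2} \to \R^{n+1}$ be the projection along $e_{n+2}$, one has $\pi(\tilde{\Sigma}) = M$, and hence $\conv(M) = \conv(\pi(\tilde{\Sigma}))$ must fall into one of the five cases of the theorem, stated now in $\R^{n+1}$. Case (5) is immediately excluded because $\tilde{\Sigma}$ contains a vertical line through each of its points and so is noncompact (equivalently, the case is vacuous by the classical maximum principle applied to the coordinate functions, which forbids compact minimal hypersurfaces without boundary in Euclidean space). The resulting classification of $\conv(M)$ is precisely Hoffman--Meeks's Theorem 2. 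There is no real obstacle in this reduction; the only thing being used beyond Theorem \ref{convex_hull_noncomp_trans} is the trivial observation that minimal hypersurfaces are self-translaters after cylindering in the translation direction, together with the fact that $\pi$ recovers $M$ on the nose from $\tilde{\Sigma}$.
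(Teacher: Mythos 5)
Your proof is correct and uses the same key idea as the paper: form the product cylinder $M\times\R$ (which is tautologically a self-translater) and apply Theorem \ref{convex_hull_noncomp_trans}, noting that the vertical projection $\pi$ of the cylinder recovers $M$. The only cosmetic difference is dimension labeling — the paper takes $N^{n-1}\subseteq\R^n$ and forms $\Sigma^n=N\times\R\subseteq\R^{n+1}$, whereas you take $M^n\subseteq\R^{n+1}$ and go up to $\R^{n+2}$; both are the same re-indexing of the same argument.
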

\begin{proof}[Proof of Corollary \ref{corr-hm}]
For $n\geq 2$, let $N^{n-1}\subseteq \Rfew$ be a  connected properly immersed minimal hypersurface. If $\partial N=\emptyset$, apply Theorem \ref{convex_hull_noncomp_trans} to the self-translater $\Sigma^n=N^{n-1}\times\Reals$. Then note
\[
\conv(N^{n-1})=\conv(\pi(N^{n-1}\times\Reals))=\conv(\pi(\Sigma)),
\]
from which the conclusion follows.
\end{proof}

As immediate corollaries to Theorem \ref{convex_hull_noncomp_trans}, we also recover the following previously known result:

\begin{corollary}[Corollary 2.2 \cite{xj_wang}]\label{corollary_domain_convex_graphs}
Let $\Sigma^n \subseteq \R^{n+1} $ be a complete connected convex graphical self-translater.  I.e.  there exists a smooth function $u~\colon~\Omega~\to~\R$, where $\Omega \subseteq \R^n$, such that $\graph\lef u \rig = \Sigma$.

Then exactly one of the following holds.
\begin{enumerate}
\item $\Omega = \R^n$.
\item $ \Omega$ is a halfspace in $\R^n$.
\item $\Omega$ is a slab between two parallel hyperplanes of $\R^n$.
\end{enumerate}
\end{corollary}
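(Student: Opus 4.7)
The strategy is to deduce the corollary directly from Theorem \ref{convex_hull_noncomp_trans} applied to $\Sigma = \graph(u)$, matching the five possibilities of that theorem against the properties of $\Omega$ forced by the graphical and convexity hypotheses, and discarding the two cases that cannot occur.

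First I would verify that $\Sigma$ satisfies the hypotheses of Theorem \ref{convex_hull_noncomp_trans}: since $u$ is smooth, $\Omega$ is open; the convexity of $\Sigma$ as a hypersurface forces $u$ to be a convex function on a convex set, so $\Omega$ is a nonempty open convex subset of $\R^n$, hence in particular connected. Completeness of $\Sigma$ together with its graphical structure makes it properly embedded in $\R^{n+1}$ with empty boundary, so $\Sigma$ is a properly immersed connected self-translater without boundary, as required. I would then use the elementary identification $\pi(\Sigma) = \Omega$, which, combined with the convexity of $\Omega$, implies that $\conv(\pi(\Sigma))$ coincides with $\Omega$ up to closure.

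Theorem \ref{convex_hull_noncomp_trans} then forces $\Omega$ (up to closure) to be one of: all of $\R^n$, a halfspace of $\R^n$, a slab between two parallel hyperplanes, a hyperplane, or a compact convex set. The first three options are precisely the three alternatives asserted by the corollary, so to finish I would exclude the last two. If $\conv(\pi(\Sigma))$ were a hyperplane, then the nonempty open set $\Omega$ would be contained in a proper affine subspace of $\R^n$ and would hence have empty $\R^n$-interior, a contradiction. If $\conv(\pi(\Sigma))$ were compact, then by the final clause of Theorem \ref{convex_hull_noncomp_trans} the hypersurface $\Sigma$ itself would be compact, whence $\Omega = \pi(\Sigma)$ would be a nonempty open compact subset of $\R^n$, which is impossible.

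I do not foresee any serious obstacle: the corollary is essentially a repackaging of Theorem \ref{convex_hull_noncomp_trans} through the projection $\pi$, once the graphical structure is used to identify $\pi(\Sigma)$ with $\Omega$. The only mild subtlety is that ruling out the compact case requires the biconditional in case (5) of Theorem \ref{convex_hull_noncomp_trans} — namely, that $\conv(\pi(\Sigma))$ compact already implies $\Sigma$ itself compact — rather than merely boundedness of $\Omega$.
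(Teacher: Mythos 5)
Your overall strategy is the same as the paper's: verify that $\Sigma$ satisfies the hypotheses of Theorem~\ref{convex_hull_noncomp_trans}, observe $\conv(\pi(\Sigma)) = \Omega$, and discard the two cases incompatible with $\Omega$ being open and noncompact. Your explicit elimination of the hyperplane and compact cases is sound and actually a bit more thorough than the paper, which leaves this implicit.

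However, there is a genuine gap in the step where you justify that $\Sigma$ is properly immersed. You assert that ``completeness of $\Sigma$ together with its graphical structure makes it properly embedded,'' but completeness plus being a graph does \emph{not} imply properness in general. For instance, the graph of $\sin(1/x)$ over $(0,\infty)$ is complete in the induced metric (the arc length to either end diverges), yet it is not a closed subset of $\R^2$, hence not proper. What saves the day here is precisely the \emph{convexity} hypothesis, which you invoke only to conclude that $u$ and $\Omega$ are convex, not to establish properness. The paper gets properness by appealing to Sacksteder's theorem: a complete convex hypersurface is the boundary of a convex body $C$, hence a closed subset of $\R^{n+1}$, hence properly embedded. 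You should insert this (or some equivalent argument exploiting convexity) before applying Theorem~\ref{convex_hull_noncomp_trans}, whose hypotheses require a properly immersed hypersurface. Once that is patched, the rest of your argument goes through as the paper intends.
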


\begin{proof}
Since $\Sigma $ is convex and complete, from a theorem of Sacksteder (see \cite{sacksteder}), we have that $\Sigma = \partial C$, where $C \subseteq \R^{n+1}$ is a convex set. Therefore $\Sigma$ is a closed set w.r.t. the ambient topology and thus is properly embedded.

Let $u \colon \Omega \subseteq \R^{n} \to \R$ be a smooth function such that $\Sigma = \graph(u)$. Then clearly $\Omega$ is convex (indeed it is the orthogonal projection of the convex set $C$ onto $\R^n$) and $u$ is a convex function. Therefore
$$
\conv ( \pi (\Sigma)) = \conv( \Omega) = \Omega.
$$
We can now apply Theorem \ref{convex_hull_noncomp_trans} in order to conclude the proof.
\end{proof}

\begin{remark}
X.-J. Wang proved more than Corollary \ref{corollary_domain_convex_graphs}: For convex graphs, Case $(2)$ (graph over a halfspace) cannot happen. 
\end{remark}

In \cite{SX17}, Spruck and Xiao showed that any complete oriented immersed mean convex $2$-dimensional self-translater is convex. In particular, any complete $2$-dimensional graphical self-translater is convex.  Therefore in the case $n = 2$ one can improve Corollary \ref{corollary_domain_convex_graphs} removing the convexity assumption. In particular we recover the following result. 

\begin{corollary}[\cite{himw} and \cite{SX17}]
\label{corollary_no_wedge_domain} The domains for 2-dimensional graphical self-translaters belong to the Cases (1)-(3), respectively all $\R^2$, half-planes or slabs in $\R^2$. In particular, a properly immersed  self-translating $2$-dimensional hypersurface $\Sigma^2\subseteq \Reals^3$ cannot be the graph over a wedge-shaped domain in $\Reals^2$.
\end{corollary}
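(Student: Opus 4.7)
The plan is to reduce directly to Corollary \ref{corollary_domain_convex_graphs} by first upgrading the graphical hypothesis to convexity via the Spruck--Xiao theorem. Suppose $\Sigma = \graph(u)$ is a properly immersed self-translater, with $u\colon \Omega \to \R$ smooth on an open connected $\Omega \subseteq \R^2$. Since any graph is embedded, ``properly immersed'' upgrades automatically to ``properly embedded''. Choosing the upward unit normal $\nu$ gives $\langle \nu, e_3\rangle = 1/\sqrt{1+|Du|^2} > 0$ pointwise, and (with the standard sign convention) the translater equation then reads $H = \langle \nu, e_3\rangle > 0$, so $\Sigma$ is strictly mean convex in this orientation.

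Next, I would verify that $\Sigma$ is complete in the induced Riemannian metric. If a sequence $p_n \in \Sigma$ leaves every intrinsic compact set while $\pi(p_n) \in \Omega$ remains bounded, then proper embeddedness forces $|u(\pi(p_n))| \to \infty$, and hence the intrinsic distances to $p_n$ diverge; if instead $\pi(p_n)$ escapes every compact of $\Omega$, intrinsic distances trivially diverge as well. This standard argument yields completeness. With orientability, mean convexity, and completeness all in place, the Spruck--Xiao theorem \cite{SX17} applies and yields that $\Sigma$ is convex.

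Finally, with $\Sigma$ complete, graphical, and convex, I would invoke Corollary \ref{corollary_domain_convex_graphs}: the domain $\Omega$ must be either all of $\R^2$, a halfspace of $\R^2$, or a closed slab between two parallel lines. A wedge-shaped domain — the open convex region bounded by two half-lines meeting at a point with opening angle strictly less than $\pi$ — is convex but is none of these three options, which rules out wedge domains and gives the ``In particular'' clause. I do not foresee any substantive obstacle: the proof is a clean concatenation of Spruck--Xiao with Corollary \ref{corollary_domain_convex_graphs}, the only mild subtlety being the passage from proper embeddedness to intrinsic completeness for graphs, which is routine.
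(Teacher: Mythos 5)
Your proposal follows essentially the same route the paper takes: invoke Spruck--Xiao to upgrade ``complete graphical'' to ``complete convex'', then feed the result into Corollary \ref{corollary_domain_convex_graphs} (which in turn rests on Theorem \ref{convex_hull_noncomp_trans}), and observe that a wedge is convex but is neither $\R^2$, a half-plane, nor a slab. The only thing you spell out that the paper treats as understood is the completeness of $\Sigma$, which is in fact automatic from proper immersion via Heine--Borel and Hopf--Rinow (as recalled in Section \ref{sec:prelims}), so your completeness discussion is correct but redundant.
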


\begin{remark}
The above Corollary \ref{corollary_no_wedge_domain} is contained in the paper \cite{himw}, where all complete $2$-dimensional graphical self-translaters have very recently been fully classified (using \cite{SX17}). Again, Case (2) in fact cannot happen for 2-dimensional graphs.
\end{remark}


In \cite{sha} and \cite{sha15}, Shahriyari proved that there are no complete $2$-dimensional translaters which are graphical over a bounded domain. This fact was later generalized by M\o{}ller in \cite{niels} (see \cite{halihaj} for the half-cylinder case), where he proved that there are no properly embedded  without boundary $n$-dimensional self-translaters contained in a cylinder of the kind $\Omega \times \R$, where $\Omega \subseteq \R^n$ is bounded:

\begin{corollary}[\cite{niels}]\label{corollary_cylinders}
No noncompact properly immersed self-translating $n$-dimensional hypersurface $(\Sigma^n, \partial \Sigma) $ in $  \R^{n+1}$  with compact boundary can be contained in a  cylinder $\Omega \times\Reals$ with $\Omega \subseteq \R^n$ bounded. 
\end{corollary}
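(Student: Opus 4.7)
The plan is to deduce this directly from the Convex Hull Classification (Theorem \ref{convex_hull_noncomp_trans}), or equivalently from the Bi-Halfspace Theorem with Compact Boundary (Theorem \ref{bi-halfspace_boundary}). Suppose for contradiction that a noncompact, properly immersed self-translater $\Sigma$ with compact $\partial \Sigma$ lies in $\Omega \times \R$ with $\Omega \subseteq \R^n$ bounded. After passing to a noncompact connected component of $\Sigma$, whose boundary is a closed subset of the compact $\partial \Sigma$ and therefore itself compact, we may assume $\Sigma$ is connected.

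Since $\Omega$ is bounded, $\pi(\Sigma) \subseteq \Omega$ is bounded, and hence $\conv(\pi(\Sigma))$ is a bounded convex subset of $\R^n$. Among the five alternatives in Theorem \ref{convex_hull_noncomp_trans}, Cases (1)--(4) each produce an unbounded convex hull (all of $\R^n$, a halfspace, a slab, or a hyperplane), so only Case (5) can occur. But Case (5) holds precisely when $\Sigma$ is compact, contradicting the noncompactness assumption and finishing the proof.

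Equivalently, one could invoke Theorem \ref{bi-halfspace_boundary} directly: since a bounded $\Omega$ fits inside a box $[-R,R]^n$, the hypersurface $\Sigma$ is contained in the two transverse vertical halfspaces $\{x_1 \leq R\}$ and $\{x_2 \leq R\}$ (with exterior normals $e_1, e_2$ linearly independent), which then forces $\Sigma$ to be compact. There is no essential obstacle in the present corollary; all the analytic substance has already been packaged into the bi-halfspace theorems of this paper, and the step here is simply the observation that a bounded cross-section is the strongest possible form of being trapped between transverse vertical halfspaces.
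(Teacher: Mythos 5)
Your proposal is correct, and the second route you give is essentially the paper's own proof: the paper also deduces the corollary directly from Theorem \ref{bi-halfspace_boundary} by observing that a bounded $\Omega$ lies in the intersection of two transverse vertical halfspaces. Your first route via the Convex Hull Classification (Theorem \ref{convex_hull_noncomp_trans}) is only superficially different, since that theorem is itself proved from Theorem \ref{bi-halfspace_boundary}; one small merit of your write-up is that you explicitly handle the reduction to a connected noncompact component (needed because both theorems assume connectedness, which the corollary does not), a point the paper passes over silently.
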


\begin{proof}
The proof follows easily from Theorem \ref{bi-halfspace_boundary}.  Indeed note that given a bounded set $\Omega \subseteq \R^n$, the cylinder $\Omega \times\Reals$ is contained in the intersection of two transverse vertical halfspaces.
\end{proof}

\begin{remark}
The proof shows more than Corollary \ref{corollary_cylinders}, namely that the conclusion holds assuming only boundedness in two directions: $\Sigma^n\subseteq\Omega_2\times\Reals^{n-1}$ cannot happen for $\Omega_2\subseteq\Reals^{2}$.
\end{remark}

As will be clear below, most of the ideas that we will need were essentially in place as early as the 1960s, much earlier than the minimal surface and curvature flow papers cited above. Namely, in the original paper by Omori \cite{omori}, he showed by quite similar methods that in Euclidean $n$-space,  cones with angle $0 < \theta < \pi$ cannot contain  properly embedded minimal surfaces.

Somewhat later, in 1989, contained within the proof of ``Theorem 2'' from \cite{hoffman-meeks} (which seems independent of Omori's ideas) is the fact that, while the Hoffman-Meeks ``halfspace theorem'' only works for minimal 2-surface immersions $\Sigma^2\to\Reals^3$, one has a ``bi-halfspace theorem'' (stronger than the cone theorems) for minimal hypersurfaces $\Sigma^{n}\to\Rmany$ for $n\geq 3$, even allowing compact boundary. Their proof used barriers from the nonlinear Dirichlet problem known as the $n$-dimensional Plateau problem for graphs. Some disadvantages of that approach are clear: For when do such barriers exist, and if they in fact do, what are their precise properties, as needed for a ``separating tangency'' argument to run?

It then appears that only within the last decade it was realized by Borb\'ely \cite{borbelywedge} that one can prove bi-halfspace theorems for minimal 2-surface immersions $\Sigma^2\to\Reals^3$, under the assumption that the Omori-Yau principle (so named after \cite{omori}-\cite{cheng-yau}) is known to be available on the given $\Sigma^2$. This was also expanded by Bessa, de Lira and Medeiros in \cite{bessalira} where they showed Borb\'ely-style ``wedge'' theorems for stochastically complete  minimal surfaces in Riemannian products $(M\times N, g_M\oplus g_N)$, where $(N,g_N)$ is complete without boundary.  Seeing as the Huisken-Ilmanen metric, in which self-translaters are the minimal surfaces, is not a Riemannian product\footnote{Note however that \cite{smocz} showed that it can be seen as a warped Riemannian product.} nor complete, and our surfaces can have boundaries, 
we will directly take Borb\'ely's method as our point of departure.

Here, in our case of $n$-dimensional self-translaters $\Sigma^n\to\Rmany$, the Omori-Yau maximum principle in turn works quite generally, which is a well-established fact that has previously been invoked by several authors for related problems: See \cite{xin}, \cite{SX16}-\cite{SX17} and \cite{imp_rim}. Many other authors have written on the topic, see e.g. \cite{schoen-yau-lectures}, \cite{pigola03}, \cite{barr}. For a general yet particularly easy to state result, let us mention this: The Omori-Yau maximum principle holds for every
submanifold properly immersed with bounded mean curvature into a Riemannian space form (see \cite{PRS05}). Here we will be using the formulation and short proof in \cite{xin}, so as to make the whole presentation quite elementary and essentially self-contained, including as a biproduct the proof of the Hoffman-Meeks results for $n\geq 3$ and empty boundary, in Corollary \ref{corr-hm} below.

In a later work \cite{CM19}, we generalize the main ideas contained in the present paper to ancient mean curvature flows, providing  a parabolic Omori-Yau principle  and using it for proving a bi-halfspace theorem for ancient flows.

\section{Overview}\label{sec:outline}
In Section \ref{sec:prelims} we introduce notation and list a few of the technical lemmas in the form that we will need them later, with (references to) short proofs.

In Section \ref{sec:bi-halfspace} we prove a new ``Bi-Halfspace Theorem'' for  properly immersed self-translaters, which is Theorem \ref{bi-halfspace}. We also fully classify all the possible pairs of halfspaces such that their intersections contain a complete self-translater, in Corollary \ref{bi-halfspace_corollary}.

In Section \ref{sec:basics} we study the convex hull of such hypersurfaces, both for compact self-translaters and for noncompact ones, but with compact (possibly empty) boundary. We observe a behavior very similar to the one of minimal submanifolds of the Euclidean space.  The main result of the section is Theorem~\ref{convex_hull_noncomp_trans} and it was inspired by a result by Hoffman and Meeks  in the context of minimal submanifolds of $\R^{n+1}$ (see  \cite{hoffman-meeks}). The proof here is based on our ``Bi-Halfspace'' Theorem \ref{bi-halfspace} and the compact boundary version Theorem \ref{bi-halfspace_boundary} and hence diverges significantly from the proof of the theorem of Hoffman and Meeks, which relied on constructing barriers via certain nonlinear Dirichlet problems.

In the Appendix (Section \ref{appendix}) we will comment more on this point and we will provide an alternative proof of Theorem \ref{convex_hull_noncomp_trans}, which is closer in spirit to the one by Hoffman and Meeks, but which only works in the case  $n=2$.

\section{Preliminaries and Notation}\label{sec:prelims}

In what follows, $(x_1,x_2, \dots, x_{n}, x_{n+1})$ are the standard coordinates of $\R^{n+1}$ and  $(\sbase_1,\sbase_2, \dots, \sbase_n, \sbase_{n+1})$ is the standard orthonormal basis of $\R^{n+1}$.

 On $\R^{n+1}$ we will, with a slight abuse of notation, denote the coordinate vector fields by $\partial_i=\frac{\partial}{\partial x_i}=e_i$. 


In this paper $\Sigma^n \subseteq \R^{n+1}$ will always denote a smooth properly immersed self-translater with velocity vector $\sbase_{n+1}$. 
Recall that properly immersed hypersurfaces with boundary are geodesically complete with boundary in the induced Riemannian metric (the Heine-Borel property with Hopf-Rinow).

The evolution of $\Sigma^n$  under the mean curvature flow is a  unit speed translation in the direction of the positive $x_{n+1}$-axis. Therefore $\Sigma^n$ satisfies the following equation
\begin{equation}\label{translater_equation}
\mcv =  \langle \sbase_{n+1}, \un \rangle \un,
\end{equation}
where $\mcv=-H\nu$ is the mean curvature vector of $\Sigma^n$ and $\un$ is the unit normal vector field on $\Sigma^n$.

Let us recall here two important tools that we will need for our work.  

\begin{lemma}[Comparison Principle for MCF]\label{comparison_principle}
Let $\varphi \colon M_1 \times [0, T) \to \Rmany $ and $\psi \colon M_2 \times [0, T)\to \Rmany$ be two hypersurfaces evolving by mean curvature flow and let us assume that $M_1$ is properly immersed while $M_2$ is compact. Then the distance between them is nondecreasing in time.
\end{lemma}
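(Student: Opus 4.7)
The plan is to invoke a parabolic maximum principle on the squared distance $u(p, q, t) := |\varphi(p, t) - \psi(q, t)|^2$, viewed as a function on $M_1 \times M_2 \times [0, T)$, and show that $\rho(t) := \sqrt{\inf u(\cdot, \cdot, t)}$ is nondecreasing. Compactness of $M_2$ combined with properness of $\varphi$ guarantees that for each $t$ this infimum is attained, and moreover that the minimizer set $X^{\ast}(t) \subseteq M_1 \times M_2$ stays inside a compact subset of the product uniformly for $t$ in any small interval. The case $\rho \equiv 0$ is vacuous, so I fix $t_0$ with $\rho(t_0) > 0$, pick $(p_0, q_0) \in X^{\ast}(t_0)$, and set $\hat n := (\varphi(p_0, t_0) - \psi(q_0, t_0))/\rho(t_0)$.

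At $(p_0, q_0)$, the first-order condition $\nabla u = 0$ forces $\hat n \perp T_{p_0} M_1$ and $\hat n \perp T_{q_0} M_2$; I orient so that $\un_1 = \hat n = -\un_2$, which makes both tangent spaces coincide with $\hat n^{\perp} \subseteq \R^{n+1}$. Plugging diagonal perturbations $(X, X) \in \hat n^{\perp} \times \hat n^{\perp}$ into the second-order condition $\Hess u \succeq 0$ produces, after a short direct calculation, the pointwise bound $A^{M_1}(X, X) + A^{M_2}(X, X) \geq 0$ for every $X \in \hat n^{\perp}$, and tracing over an orthonormal basis gives the key geometric inequality $\langle \hat n, \mcv_1 - \mcv_2 \rangle \geq 0$, where $\mcv_i$ is the mean curvature vector of $M_i$ at the corresponding footpoint. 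Differentiating $u$ along the flow using the MCF equations $\partial_t \varphi = \mcv_1$ and $\partial_t \psi = \mcv_2$, and invoking the first-order alignment, then yields
\[
\partial_t u(p_0, q_0, t_0) \;=\; 2 \langle \varphi - \psi, \mcv_1 - \mcv_2 \rangle \;=\; 2 \rho(t_0) \langle \hat n, \mcv_1 - \mcv_2 \rangle \;\geq\; 0.
\]

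Hamilton's trick for the infimum (equivalently, a Danskin-type envelope formula applied on the compact subset containing $X^{\ast}(t)$) now delivers $\tfrac{d^+}{dt} \rho(t_0)^2 \geq \inf_{(p, q) \in X^{\ast}(t_0)} \partial_t u(p, q, t_0) \geq 0$, and continuity of $\rho$ upgrades this right-derivative bound to the claimed monotonicity on $[0, T)$. The main technical obstacle is the rigorous application of the envelope formula in a non-compact product, especially when $X^{\ast}(t)$ is not a singleton and may vary only upper-semicontinuously in $t$; I plan to finesse this by confining $X^{\ast}(t)$, for $t$ in a small neighborhood of any $t_0$ with $\rho(t_0) > 0$, to a fixed compact subset of $M_1 \times M_2$---possible by properness of $\varphi$ and compactness of $M_2$---after which the classical compact-domain Danskin formula applies directly. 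The degenerate case $\rho(t_0) = 0$ can be treated separately by the classical strong-maximum-principle argument for graphical MCF at a touching, which shows that $\rho$ cannot drop through zero once nondecreasing has been established on $\{\rho > 0\}$.
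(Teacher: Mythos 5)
Your proof is correct and is essentially the standard argument, which is also the one in the reference the paper cites for this lemma (Mantegazza, Theorem 2.2.1): apply Hamilton's trick (or the Danskin envelope formula, as you phrase it) to the squared extrinsic distance, using the first-order alignment and second-order convexity conditions at a spatial minimizer together with the MCF evolution equations to get $\partial_t u \ge 0$ there, then confine the argmin set to a compact region via properness of $M_1$ and compactness of $M_2$. The sign bookkeeping for the second fundamental forms and the Danskin inequality $\tfrac{d^+}{dt}\rho^2 = \min_{X^*}\partial_t u \ge 0$ check out, and your remark on the degenerate $\rho=0$ case is in fact already subsumed by continuity plus monotonicity on $\{\rho>0\}$, so no separate tangency argument is needed there.
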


\begin{proof}
See e.g. the proof of Theorem 2.2.1 in \cite{mantegazza}.
\end{proof}

\begin{lemma}[Principle of Separating Tangency for  Self-Translaters] \label{tangency_principle}
Let $\Sigma_1^n$ and $\Sigma_2^n$ be two connected (unit speed, same direction) self-translaters  immersed into $ \R^{n+1} $, with (possibly empty) boundaries $\partial \Sigma_1$ and $\partial \Sigma_2$.

Suppose that there exists a  point $p \in \Sigma_1 \cap \Sigma_2$ such that it is an interior point for both the self-translaters. Let us assume that the corresponding tangent spaces $T_{p}\Sigma_1 $ and $T_p\Sigma_2$ coincide and assume that, locally around $p$, $\Sigma_1$ lies on one side of $\Sigma_2$. 

Then there are open neighborhoods $U_1 \subseteq \Sigma_1$ and  $U_2 \subseteq \Sigma_2$ of $p$ such that $U_1 = U_2$.
\end{lemma}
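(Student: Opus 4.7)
The plan is to reduce the lemma to the classical Hopf strong maximum principle for a uniformly elliptic linear operator, applied to the difference of two local graph representations of $\Sigma_1$ and $\Sigma_2$.

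First, I would choose Euclidean coordinates on $\R^{n+1}$ so that $T_p \Sigma_1 = T_p \Sigma_2$ becomes the horizontal hyperplane through $p$, i.e.\ I rotate so that the common tangent plane is spanned by the first $n$ coordinate axes. Since $p$ is an interior point of both $\Sigma_1$ and $\Sigma_2$ and the tangent planes agree, by the implicit function theorem there exists a small open ball $B\subseteq T_p\Sigma_1$ centered at (the projection of) $p$ and smooth functions $u_1, u_2 \colon B \to \R$ such that, near $p$, each $\Sigma_i$ is the graph of $u_i$ over $B$, with $u_i(0) = 0$ and $D u_i(0) = 0$. The one-sided hypothesis becomes $u_1 \geq u_2$ on $B$ (after possibly swapping the labels). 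Note that in these rotated coordinates, the translation direction $e_{n+1}$ becomes some fixed unit vector $\omega \in \R^{n+1}$; this change plays no essential role.

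Next I write the translater equation \eqref{translater_equation} as a quasilinear elliptic PDE for a graph. If we denote $\nu_i = (-Du_i, 1)/\sqrt{1+|Du_i|^2}$ and recall that, for a graph, the scalar mean curvature is $H_i = \operatorname{div}(Du_i / \sqrt{1+|Du_i|^2})$, the equation \eqref{translater_equation} becomes, for $k = 1,2$,
\[
Q[u_k] \;:=\; \operatorname{div}\!\left(\frac{D u_k}{\sqrt{1+|D u_k|^2}}\right) \;-\; \frac{\langle \omega, (-Du_k, 1)\rangle}{1+|Du_k|^2} \;=\; 0.
\]
This is quasilinear elliptic, and because $|Du_k(0)| = 0$ the symbol of $Q$ at the origin is just the Laplacian plus a bounded gradient term, so $Q$ is uniformly elliptic on a small enough neighborhood of $0$.

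Now I would set $w := u_1 - u_2 \geq 0$ with $w(0) = 0$ and $Dw(0) = 0$. The standard trick is to write
\[
0 \;=\; Q[u_1] - Q[u_2] \;=\; \int_0^1 \frac{d}{dt} Q[t u_1 + (1-t) u_2]\, dt,
\]
which, by differentiation under the integral sign, produces a linear elliptic operator $L$ of the form
\[
L w \;=\; a^{ij}(x)\, \partial_i \partial_j w + b^i(x)\, \partial_i w \;=\; 0,
\]
with continuous coefficients $a^{ij}, b^i$ on a neighborhood of $0$ and with $(a^{ij}(0))$ equal to the identity; in particular $L$ is uniformly elliptic on a smaller neighborhood. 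Crucially, there is no zeroth-order term, so nonnegativity of $w$ is enough to invoke Hopf's strong maximum principle: an interior nonnegative supersolution of $L$ attaining an interior minimum must be constant. Since $w \geq 0$ and $w(0) = 0$, we conclude $w \equiv 0$ on a neighborhood of $0$, and hence the two graphs coincide there, giving the required neighborhoods $U_1 = U_2$.

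The main technical point to be careful about is justifying that the subtraction step produces a genuine linear elliptic operator with no zeroth-order term and with coefficients smooth enough for Hopf to apply. This is routine because $Q$ is a smooth function of $(Du, D^2 u)$ and $|Du_1|, |Du_2|$ are small near $0$, so the integrated coefficients are continuous and the principal part is a small perturbation of $\Delta$; the extra first-order term coming from the translater velocity contributes only to $b^i$ and not to a zeroth-order part, which is precisely what is needed.
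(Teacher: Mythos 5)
Your proof is correct and follows the same standard route as the references the paper cites for this lemma (the paper itself only points to \cite{jpg}, \cite{niels}, \cite{mss}): locally write both hypersurfaces as graphs over the common tangent plane, observe that the translater equation is a quasilinear elliptic PDE involving only $Du$ and $D^2 u$ (no $u$ term), subtract to obtain a linear uniformly elliptic operator with no zeroth-order coefficient, and invoke Hopf's strong maximum principle at the interior minimum of $w=u_1-u_2\geq 0$. A minor slip worth correcting: the translater term in $Q$ should have $\sqrt{1+|Du_k|^2}$ rather than $1+|Du_k|^2$ in the denominator, but this is immaterial since what matters is only that this term is a smooth function of $Du_k$ alone, so no zeroth-order term appears after subtraction.
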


\begin{proof}
This uses the maximum principle and unique continuation. See Theorem 2.1.1 in \cite{jpg}, Lemma 2.4 in \cite{niels} and Theorem 2.1 in \cite{mss}.
\end{proof}


\subsection*{Well-known Examples}
We conclude this section by enumerating some of the most well-known examples of self-translaters.
\begin{enumerate}
\item (Translating minimal hypersurfaces) Any hyperplane of $\R^{n+1}$ which is parallel to $e_{n+1}$ is a self-translater. More generally, if $N^{n-1} \subseteq \R^n$ is a minimal submanifold, then we have that $\Sigma \coloneqq N \times \R \subseteq \R^{n+1}$ is self-translating in the $e_{n+1}$-direction. This follows from the short computation $H_{N\times \R}=0=\left\langle(\nu_N,0),(\textbf{0},1)\right\rangle_{\Rmany}$.
\item (Grim reaper cylinder) Consider the function $f \colon \lef -\frac{\pi}{2}, \frac{\pi}{2}\rig \to \R$ defined as $f(x) \coloneqq - \ln\lef \cos \lef x \rig \rig $. Its graph $\Gamma \coloneqq \graph\lef f \rig$ is called Calabi's \emph{grim reaper} curve (first found in \cite{mullins}) and it is the only nonflat connected complete translating soliton for the curve shortening flow. The hypersurface $\Gamma^n \coloneqq \R^{n-1} \times \Gamma \subseteq \R^{n+1}$ is called a \emph{grim reaper cylinder} and it is a self-translater.

\item (Rotationally symmetric self-translaters) In \cite{CSS}, the authors classify all the self-translaters which are rotationally symmetric with respect to the $x_{n+1}$-axis. These are the so-called \emph{bowl soliton} $U$ which was already discovered in \cite{aw}, and the family of \emph{winglike self-translaters}, also known as \emph{translating catenoids}.  
The bowl soliton is the graph of an entire convex function $u \colon \R^n \to \R$ and it is asymptotic to a paraboloid. Indeed it is also known as the \emph{translating paraboloid}. 

The wing-like self-translaters are all diffeomorphic to $\mathbb{S}^{n-1} \times \R$, where $\mathbb{S}^{n-1}$ is the $(n-1)$-dimensional sphere. They roughly look like two bowl solitons, one above the other, glued together with a vertical neck. Both of the ends are asymptotic to $U$. For each $R > 0$ there exists a unique (up to a translation in the $x_{n+1}$ direction) winglike self-translater $W_R$ such that the size of its neck is $R>0$. 

\item (Gluing constructions) The desingularization techniques, originally  developed by Kapouleas (see \cite{kap}) for building new examples of minimal and constant mean curvature hypersurfaces,  have been applied  by X.H. Nguyen and others, in order to prove the existence of new translating solitons, by ``gluing together'' already known examples. For more details, we refer to \cite{tridents}, \cite{finite_gr}, \cite{doubly_periodic},  \cite{ddn} and \cite{smith}. See also \cite{kkm} (and \cite{nguy-shrink}) for the first gluing construction for mean curvature solitons with non-flat ends.

\item (Delta-wing self-translaters) Recently,  Bourni, Langford, and Tinaglia (Theorem 1 in \cite{blt18}), and independently Hoffman, Ilmanen, Mart\'in and White (Theorems 4.1, 8.1 in \cite{himw}) have proved that for each $b > \frac{\pi}{2}$, there exists a strictly convex and complete self-translater which lies in the slab $(-b, b) \times \R^n$ and in no smaller slab.

Furthermore, also uniqueness was proven in \cite{himw}. They called this new family of self-translaters, which is parametrized by the width of the slab, the $\Delta$-wings. 

\item (Annuli, helicoid and Scherk's) In an upcoming paper \cite{himw2}, the authors have announced that they will be constructing several new families of  properly embedded (nongraphical)
translators (quoting the abstract for a talk at Stanford in July 2018): ``[...] a two-parameter family of translating annuli, examples that
resemble Scherk’s minimal surfaces, and examples that resemble helicoids.''
\end{enumerate}


\section{Bi-halfspace Theorems for Self-Translating Solitons}\label{sec:bi-halfspace}

In this section we prove the ``Bi-Halfspace'' Theorem \ref{bi-halfspace} and the case with boundary Theorem \ref{bi-halfspace_boundary}. Let us first make a few remarks:

\begin{remark}
In the theorems, the transversality can simply be defined via the unit normals to the boundary hypersurfaces (which are affine hyperplanes) of the halfspaces: They must not be (anti-)parallel as vectors in $\Rmany$.

Note that these theorems are vacuously true for $n=1$, as in $\Reals^2$ all vertical affine halfspaces are (anti-)parallel and hence never transverse. Thus, in the below we will throughout tacitly assume $n\geq 2$.

Note also that the statements and proofs of the ``Bi-Halfspace'' Theorem \ref{bi-halfspace} and the case with boundary Theorem \ref{bi-halfspace_boundary} can be either false or true, with an easy proof, if one or both of the two halfspaces are not vertical. See Corollary \ref{bi-halfspace_corollary} at the end of this section for a clarification.
\end{remark}

Let us state the version of the Omori-Yau lemma which we will be needing:
\begin{lemma}(Omori-Yau for Translating Solitons)\label{mementomori}
Let $(\Sigma^n,\partial\Sigma)$ be a properly immersed self-translating soliton in $\Reals^{n+1}$ which is complete with boundary. Suppose that $f:\Sigma^n\to\R$ is a function which satisfies:
\begin{itemize}
\item[(i)] $\sup_\Sigma |f|<\infty,\quad \sup_{\partial \Sigma} f<\sup_\Sigma f$,
\item[(ii)]$f\in C^0(\Sigma)$,
\item[(iii)] $\exists \varepsilon_f>0$ s.t. $f$ is $C^2$ on the set $\{p\in\Sigma: f(p) > \sup_\Sigma f -\varepsilon_f\}$.
\end{itemize}
Then there exists a sequence $\{p_k\}$ in $\Sigma^n$ such that:
\begin{align}
&\lim_{k\to\infty} f(p_k)= \sup_{\Sigma} f,\label{OY_prop1}\\
&\lim_{k\to\infty} \nabla^\Sigma f(p_k)=0\label{OY_prop2},\\
&\lim_{k\to\infty} \Delta_\Sigma f(p_k)\leq 0.\label{OY_prop3}
\end{align}

\end{lemma}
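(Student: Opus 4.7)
The plan is to follow the classical Omori--Yau strategy adapted to translating solitons: produce a proper function $\phi$ on $\Sigma^n$ whose intrinsic gradient and Laplacian are both uniformly bounded, then perturb $f$ by subtracting $\varepsilon\phi$ and read off the second-order information at interior maxima of $f-\varepsilon\phi$ as $\varepsilon\downarrow 0$. The only geometric ingredient needed is the pointwise bound $|\mathbf{H}|=|\langle e_{n+1},\un\rangle|\leq 1$, which is immediate from the translater equation \eqref{translater_equation}.

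First I would take the test function $\phi(x):=\sqrt{1+|x|^2}$ in the ambient coordinates. Using $\Delta_\Sigma X=\mathbf{H}$ and $\sum_{i=1}^{n+1}|e_i^\top|^2=n$, one has $\nabla^\Sigma|x|^2=2x^\top$ and $\Delta_\Sigma|x|^2=2n+2\langle x,\mathbf{H}\rangle$; a one-line chain-rule calculation then yields $|\nabla^\Sigma\phi|<1$ and
\[
|\Delta_\Sigma\phi|\;\leq\; C(n)
\]
uniformly on $\Sigma$. Here the translater bound $|\mathbf{H}|\leq 1$ is exactly what is needed to cancel the potentially linear growth of $\langle x,\mathbf{H}\rangle$ against the denominator $\sqrt{1+|x|^2}$. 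Since $\Sigma\hookrightarrow\mathbb{R}^{n+1}$ is properly immersed and complete with boundary, $\phi\colon\Sigma\to[1,\infty)$ is proper as a map from $\Sigma$, i.e.\ sublevel sets $\{\phi\leq R\}$ are compact in $\Sigma$.

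Next, set $M:=\sup_\Sigma f$, fix $\delta>0$ with $\sup_{\partial\Sigma}f\leq M-2\delta$ (allowed by (i), with the convention $\sup_\emptyset=-\infty$), and for small $\varepsilon>0$ consider $f_\varepsilon:=f-\varepsilon\phi$. Since $f$ is bounded and $\phi$ is proper, $f_\varepsilon(p)\to-\infty$ as $p$ escapes every compact subset of $\Sigma$; testing at any witness $q$ with $f(q)$ close to $M$ shows $\sup_\Sigma f_\varepsilon\to M$ as $\varepsilon\to 0$. Consequently, for every sufficiently small $\varepsilon$ the supremum of $f_\varepsilon$ is attained at some interior point $p_\varepsilon\in\Sigma\setminus\partial\Sigma$ (the boundary is excluded since $\sup_{\partial\Sigma}f_\varepsilon\leq M-2\delta$) and $f(p_\varepsilon)\to M$. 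Eventually $f(p_\varepsilon)>M-\varepsilon_f$, so by hypothesis (iii) $f$ is $C^2$ near $p_\varepsilon$, and the standard calculus criteria at an interior maximum give $\nabla^\Sigma f(p_\varepsilon)=\varepsilon\nabla^\Sigma\phi(p_\varepsilon)$ and $\Delta_\Sigma f(p_\varepsilon)\leq\varepsilon\Delta_\Sigma\phi(p_\varepsilon)$. Setting $p_k:=p_{1/k}$ and invoking the uniform bounds on $\phi$ yields \eqref{OY_prop1}--\eqref{OY_prop3}.

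The only genuine obstacle is the bookkeeping in the third step: arranging that the perturbed maximum lies simultaneously at an interior point of $\Sigma$ and inside the $C^2$-region of $f$. Both are handled by the strict gap condition $\sup_{\partial\Sigma}f<\sup_\Sigma f$ together with the properness of $\phi$, which force $f(p_\varepsilon)$ to approach $M$ and stay bounded away from $\sup_{\partial\Sigma}f$. Everything else reduces to an elementary computation whose one nontrivial input is the translater identity $|\mathbf{H}|\leq 1$.
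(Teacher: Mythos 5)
Your proof is correct and is essentially the same classical Omori--Yau argument that the paper delegates to Xin's cited proof: one perturbs $f$ by $\varepsilon\sqrt{1+|x|^2}$, whose intrinsic gradient and Laplacian are uniformly bounded on a translater because $|\mathbf{H}|\leq 1$, and reads off the first- and second-order conditions at the resulting interior maximum as $\varepsilon\to 0$; the boundary hypothesis $\sup_{\partial\Sigma}f<\sup_\Sigma f$ is used exactly as you indicate, to keep the maximum interior.
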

\begin{proof}[Proof of Lemma \ref{mementomori}]
A short direct proof can be found in \cite{xin} (using that $\Sigma^n$ is complete with boundary and properly immersed), which is easily adapted to the form stated here. For bounded $|f|$ the condition of Xin,
\[
a_k\in\Sigma^n,\quad\|a_k\|\subRmany\to\infty\quad\Rightarrow\quad\lim_{k\to\infty}\frac{f(a_k)}{\|a_k\|\subRmany}=0
\]
is of course trivially satisfied.
\end{proof}

\begin{proof}[Proof of the ``Bi-Halfspace'' Theorem \ref{bi-halfspace}]
Any affine halfspace $H\subseteq\Reals^{n+1}$ can be given by a pair of (offset and direction, resp.) vectors $(b,w)\in \Reals^{n+1}\times \mathbb{S}^{n}$, where we view $\mathbb{S}^{n}\subseteq\Rmany$. Namely: 
\begin{align*}
&H=H_{(b,w)}:=\left\{x\in\Reals^{n+1}: \langle x-b,w\rangle\geq 0\right\},\\
&P:=\partial H = \left\{x\in\Reals^{n+1}: \langle x-b,w\rangle= 0\right\}.
\end{align*}
Note that $w$ is unique but any $b\in \partial H$ works. Recall that such two $n$-planes $P_1,P_2$ have transverse intersection $P_1\pitchfork P_2$ if and only if the corresponding unit normals $w_1\nparallel w_2$ (so antiparallel is also forbidden). This is also what it means for two halfspaces $H_1$ and $H_2$ to be transverse.

What we call vertical halfspaces are those $H_{(b,w)}$ for which $w\perp e_{n+1}$, i.e. $w=(w^{(1)},\ldots,w^{(n)},0)\in\mathbb{S}^{n}\times\{0\}$.

We now perform a couple of normalizations which are not essential but greatly simplify some of the computations: Suppose that an $e_{n+1}$-directed self-translating hypersurface $\Sigma^n\subseteq\Rmany$ is contained in a pair of transverse vertical halfspaces, i.e. that $\Sigma^n\subseteq H_1\cap H_2$. By simultaneously moving $\Sigma^n$ and $H_i$, we may assume $b_1=b_2=0$ (pick any $b\in H_1\cap H_2$, then translate by $-b$). Note also that $\spand (w_1,w_2)$ defines a $2$-dimensional subspace in $\Rfew\times\{0\}$.

We can then, by acting rigidly with $O(n)$ on the $\Reals^{n}$-factor (take an orthonormal basis for this 2-plane, fill out to an orthonormal basis of $\Rfew$ finally compose with an $O(2)$-map in the two first coordinates), we can assume that there exists $(\xi,\eta)$ such that $\xi,\eta>0$ with $\|(\xi,\eta)\|=1$ and:
\[
w_1 = (\xi,\eta,0,\ldots,0),\quad w_2 = (\xi,-\eta,0,\ldots,0).
\]

As explained in the introduction, we will now proceed with an adaptation of the method of Borb\'ely to our situation of $n$-dimensional self-translaters. Consider for $R>0$ the respective affine hyperplanes of equidistance: $P_i + Rw_i=\{x: \langle x,w_i\rangle = R\}$. Their intersection locus is an $(n-1)$-dimensional vertical affine subspace $\mathscr{L}_R:=(P_1 + Rw_1)\cap(P_2 + Rw_2)$. Linear algebra reveals a simple explicit expression for this locus:
\beq\label{LocusElectus}
\mathscr{L}_R:=\left\{\left(\fracsm{R}{\xi},0,x_3,\ldots,x_{n+1}\right):\: (x_3,\ldots,x_{n+1})\in\Reals^{n-1}\right\}.
\eeq

We consider then the ambient Euclidean distance function from points $x\in\Rmany$ to $\mathscr{L}_R$:
\beq
d(x):=d_R(x):=\dist\subRmany(x,\mathscr{L}_R)=\sqrt{\left(x_1-\fracsm{R}{\xi}\right)^2 + x_2^2},\quad x\in\Rmany.
\eeq

Clearly $\mathscr{L}_R=\{x\in\Rmany: d_R(x) = 0\}$ and $\|\nabla\supRmany d\|=1$ on $\Rmany\setminus\mathscr{L}_R$. We define the cylindrical set by:
\[
\mathscr{D}_R=\left\{x\in\Rmany: d_R(x) \leq R\right\},
\]
which is an $(n+1)$-dimensional solid with boundary.
Then for any $R>0$, explicitly
\[
\mathscr{D}_R\cap P_i = \left\{\left(\fracsm{R\eta^2}{\xi},(-1)^{i} R\eta,x_3,\ldots,x_{n+1}\right):  (x_3,\ldots,x_{n+1})\in\Reals^{n-1}\right\},
\]
which disconnects $\partial{(H_1\cap H_2)}$ and the set $(H_1\cap H_2)\setminus\mathscr{D}_R$ has exactly two connected components (both unbounded).

We label by $\mathcal{V}_R$ the connected component of $(H_1\cap H_2)\setminus\mathscr{D}_R$ where $d_R$ is bounded (the other component, where $d_R$ is unbounded, we will not need to refer to directly). Notice that as $R\nearrow \infty$ we have $\mathcal{V}_R\nearrow H_1\cap H_2$. From now on, we will pick a fixed $R>0$ large enough so that $\Sigma\cap \mathcal{V}_R\neq \emptyset$.

In the below, we will at times drop the subscript and write $d(x):=d_R(x)$.

A couple of standard, elementary computations show that
\begin{align}
\label{ZeroDir}
\Hess\subRmany d \left(\nabla\supRmany d_R,\nabla\supRmany d_R \right)&=0, \quad\textrm{on}\quad \Rmany\setminus \mathscr{L}_R,\\
\Delta\subRmany d_R & = \frac{1}{d_R}, \quad\textrm{on}\quad \Rmany\setminus \mathscr{L}_R.
\end{align}
The first equation, giving an eigenvector field for the eigenvalue $\lambda = 0$, can also be deduced from $d_R(x)$ being linear in the gradient direction. Note also that as $d_R$ does not depend on the last $n-1$ coordinates of $\Rmany$, $\Hess\subRmany$ has the  $n-1$ orthonormal eigenvector fields with eigenvalue zero $e_3,\ldots,e_{n+1}$, all perpendicular to $\nabla\supRmany d_R$. The only nonzero eigenvalue is $\lambda = 1/d_R$ with unit length eigenvector field correspondingly given by e.g.
\beq\label{ChiField}
\chi=\Big(-\fracsm{\partial d_R}{\partial x_2},\fracsm{\partial d_R}{\partial x_1},0,\ldots,0\Big), \quad\textrm{on}\quad \Rmany\setminus \mathscr{L}_R,
\eeq
which together with the other listed eigenvector fields forms an orthonormal frame field on $\Reals^{n+1}\setminus\mathcal{L}_R$.

The following simple fact follows from a small exercise in linear algebra: Given a square symmetric matrix $A\in\mathrm{Mat}_{n+1}(\Reals)$ the trace over an $n$-dimensional hyperplane $P_{\mu}$ defined by a unit normal vector $\mu\in\Rmany$ is:
\beq
\tr_{\mu}(A) = \sum_{i=1}^{n+1}\lambda_i\left(1-\left(\left\langle v_i,\mu\right\rangle\subRmany\right)^2\right),
\eeq
where the $(\lambda_1,\ldots,\lambda_{n+1})$ are the eigenvalues of $A$ with multiplicity and $(v_i)\subseteq\Rmany$ a corresponding orthonormal basis of eigenvectors. Thus in our case of a Hessian with only one nonzero eigenvalue and corresponding unit eigenvector field $\chi$, we get the comparatively simple expression from tracing over $T_p\Sigma$ with the unit normal $\nu$:
\beq\label{TraceTrace}
\tr_{\Sigma} \left(\Hess\subRmany d\right) = \frac{1-\left(\left\langle \chi,\nu\right\rangle\subRmany\right)^2}{d},\quad\textrm{on}\quad\Sigma^n\setminus \mathscr{L}_R.
\eeq

We now define the modified distance function $f:\Sigma^n\to \Reals$:
\begin{equation}\label{definition_f}
f(p)=\begin{cases}
d_R(p), \quad p\in\Sigma\cap \mathcal{V}_R,\\
R, \quad\quad\:\:\: p\in\Sigma^n\setminus\big(\mathcal{V}_R\cap\mathscr{D}_R\big).
\end{cases}
\end{equation}
This function is well-defined and continuous (as $d_{\mid\partial\mathscr{D}_R}=R$) and it is smooth on $\Sigma^n\setminus\mathscr{D}_R$. It is also bounded, namely note that explicitly we have (using for the first inequality that $R>0$ was fixed large enough that $\Sigma\cap \mathcal{V}_R\neq\emptyset$, and recall also $0<\xi<1$):
\beq\label{f-bounds}
R<\sup_\Sigma f \leq R/\xi<\infty.
\eeq

At points $p\in\Sigma\cap \mathcal{V}_R$ (so that in particular $f=d_{\mid\Sigma}$ is smooth), we have that the gradient equals the tangential part of the ambient gradient:
\beq\label{grad_f}
\nabla^\Sigma f = \left(\nabla\supRmany d\right)^\top = \nabla\supRmany d- \left(\nabla\supRmany d\right)^\perp=\nabla\supRmany d - \langle\nabla\supRmany d,\nu\rangle\subRmany\nu,
\eeq
with length computed using (\ref{ChiField}) to be (recall again $\|\nabla\supRmany d\|\subRmany=1$):
\beq\label{dot_to_zero}
\begin{split}
\|\nabla^\Sigma f\| & = \sqrt{1 - \left(\left\langle \nabla\supRmany d,\nu\right\rangle\subRmany\right)^2}\\
& = \left|\left\langle \chi,\nu\right\rangle\subRmany\right|.
\end{split}
\eeq
So we can finally recast (\ref{TraceTrace}) as the following fundamental identity for the distance function to the locus $\mathcal{L}_R$:
\beq\label{Laplace-f-one}
\tr_{\Sigma} \left(\Hess\subRmany d_R\right) = \left(1-\|\nabla^\Sigma f\|^2\right)\Delta\subRmany d_R,\quad\mathrm{on}\quad\Sigma\cap \mathcal{V}_R.
\eeq

We recall that the vector-valued second fundamental form is $A(X,Y):=(\nabla^{\Rmany}_X Y)^\perp$. Now apply (\ref{grad_f}) and recall $\nabla^\Sigma_X Z=\left(\nabla^{\Rmany}_X \overline{Z}\right)^\top$, for $\overline{Z}$ any extension of $Z$. Then for any $X,Y\in T_p\Sigma$:
\begin{align*}
\Hess_\Sigma f(X,Y) &:= \left\langle \nabla^\Sigma_X\nabla^\Sigma f, Y \right\rangle_\Sigma = 
\left\langle \nabla^\Sigma_X\left[\nabla\supRmany d - \left(\nabla\supRmany d\right)^\perp\right], Y \right\rangle\\
&=\left\langle \nabla_X\supRmany\left[\nabla\supRmany d - \overline{\left(\nabla\supRmany d\right)^\perp}\right], Y \right\rangle\\
&=\Hess\subRmany d(X,Y) - \left\langle \nabla_X\supRmany\overline{\left(\nabla\supRmany d\right)^\perp}, Y \right\rangle\\&=\Hess\subRmany d(X,Y) + \left\langle \nabla\supRmany d, A(X,Y)\right\rangle\subRmany,
\end{align*}
where the last step is seen by computing
\[
X.\left\langle \overline{\left(\nabla\supRmany d\right)^\perp}, \overline{Y} \right\rangle = \left\langle \nabla_X\supRmany\overline{\left(\nabla\supRmany d\right)^\perp}, \overline{Y} \right\rangle + \left\langle \overline{\left(\nabla\supRmany d\right)^\perp}, \nabla_X\supRmany \overline{Y} \right\rangle,
\]
and then evaluting on $\Sigma$ to get:
\[
0=\left\langle \nabla_X\supRmany\overline{\left(\nabla\supRmany d\right)^\perp}, Y \right\rangle + \left\langle \left(\nabla\supRmany d\right)^\perp, A(X,Y)\right\rangle.
\]

Taking now the trace over $T_p\Sigma$ we see:
\beq
\Delta_\Sigma f = \tr_{\Sigma} \left(\Hess\subRmany d\right) + \left\langle \nabla\supRmany d, \mcv\right\rangle\subRmany
\eeq

Here we used that the mean curvature vector is $\mcv:=\tr_\Sigma A = -H\nu$. Using now the self-translater equation $H=\langle e_{n+1},\nu\rangle$, we get:
\beq\label{Laplace-f}
\Delta_\Sigma f = \tr_{\Sigma} \left(\Hess\subRmany d\right) - \langle \nabla\supRmany d, \nu \rangle\langle e_{n+1},\nu\rangle.
\eeq

Combining (\ref{Laplace-f-one}) and (\ref{Laplace-f}) we finally have shown:
\beq\label{main_identity}
\Delta_\Sigma f=\frac{1-\|\nabla^\Sigma f\|^2}{d} - \big\langle \nabla\supRmany d, \nu \big\rangle\big\langle e_{n+1},\nu\big\rangle,\quad\mathrm{on}\quad \Sigma\cap \mathcal{V}_R.
\eeq

We will now apply the Omori-Yau principle in Lemma \ref{mementomori} to $f:\Sigma^n\to\R$, so we get a sequence of points $\{p_k\}$ on $\Sigma^n$ with the Omori-Yau properties (\ref{OY_prop1})-(\ref{OY_prop3}). To see that the Omori-Yau principle indeed applies here,  we check that all the conditions in Lemma \ref{mementomori} hold. By construction $0<\sup_{\Sigma} f<\infty$, $f\in C^0(\Sigma)$ and $f$ is $C^2$ where relevant. Recall also that since by (\ref{f-bounds}) we know $\sup_\Sigma f > R $, and as $f|_{\Sigma\setminus \mathcal{V}_R}\leq R$ (note also that in principle $\Sigma\setminus \mathcal{V}_R=\emptyset$ is possible), we may assume that all $p_k\in \Sigma\cap \mathcal{V}_R$.

To proceed we now need to analyze the last ``perturbation term'' in (\ref{main_identity}), which came from the self-translater equation. Notice first that by the triangle inequality 
\beq\label{SqueezeThatTerm}
\big|\big\langle e_{n+1},\nu\big\rangle\big|\leq \big|\big\langle e_{n+1},\nabla\supRmany d\big\rangle\big| + \big|\big\langle e_{n+1},\nu-\nabla\supRmany d\big\rangle\big|\leq\big\|\nu-\nabla\supRmany d\big\|,
\eeq
using also the fact that $\langle e_{n+1},\nabla\supRmany d\rangle=0$ and finally applying the Cauchy-Schwarz inequality.

We know from the property (\ref{OY_prop2}) combined with Equation (\ref{dot_to_zero}) that the limit
\beq\label{junk_limit}
\big|\big\langle \nabla\supRmany d, \nu \big\rangle\big|(p_k)\to 1,\quad\mathrm{as}\quad k\to\infty.
\eeq
holds, so from a certain stage the inner product has at each point a definite sign. By the Pigeon Hole Principle, there must then exist a sign $\sigma_\infty\in\{-1,1\}$ and a subsequence of points such that $\langle \nabla\supRmany d, \nu \rangle \to \sigma_\infty$. So by, if necessary, flipping orientations $\nu \leftrightarrow -\nu$ (a symmetry for the self-translater equation) we may assume that $0<\langle \nabla\supRmany d, \nu \rangle \to 1$ on the sequence of points. This also leads to:
\beq
\big\|\nu(p_k)-\nabla\supRmany d_R(p_k)\big\|_{\Rmany}^2 = 2\left[1 - \langle \nabla\supRmany d, \nu\rangle\right]\to 0.
\eeq
In consequence, we can use (\ref{SqueezeThatTerm}) to conclude that:
\beq\label{JunkTerm}
\left|\left\langle e_{n+1},\nu\right\rangle\right|(p_k)\to 0.
\eeq
Now, from (\ref{JunkTerm}) with either (\ref{junk_limit}) or simply $|\langle \nabla\supRmany d, \nu \rangle|\leq 1$, the last term in (\ref{Laplace-f}) tends to zero. Going to the limit in (\ref{main_identity}), we thus conclude that the limits exist in the following relation:
\beq\label{laplace_contradict}
\lim_{k\to\infty}\Delta_\Sigma f (p_k)= \lim_{k\to\infty}\frac{1}{d(p_k)} \geq \frac{\xi}{R} > 0,
\eeq
using again $0<\xi<1$.
This violates Property (\ref{OY_prop3}) in the Omori-Yau maximum principle of Lemma \ref{mementomori}, namely that $\lim_{k\to\infty} \Delta_\Sigma f(p_k)\leq 0$. This contradiction concludes the proof that there cannot exist any such self-translater.
\end{proof}

\begin{proof}[Proof of the Theorem \ref{bi-halfspace_boundary}]
To proceed in the case of compact nonempty boundary, we will again assume that $H_1$ and $H_2$ are as in the proof of the ``Bi-Halfspace'' Theorem \ref{bi-halfspace}, while we now allow $(\Sigma^n,\partial\Sigma)$ to be complete with compact boundary and still properly immersed. We furthermore assume that $\Sigma^n$ is connected. For every $R>0$, let $\mathscr{L}_R$, $\mathscr{D}_R$ and $d = d_R$ be as in the proof of the Theorem \ref{bi-halfspace}. Recall that $\mathcal{V}_R$ denotes that connected component of $\lef H_1 \cap H_2 \rig\setminus \mathscr{D}_R$ on which $d$ is bounded. Let again $f$ be the function defined in \eqref{definition_f}.
Note that since $\partial \Sigma$ is compact, we can pick $R>0$ large enough so that $\partial \Sigma \subseteq \mathcal{V}_R$.

We will now, for contradiction, assume that $(\Sigma,\partial \Sigma)$ is not compact. We will distinguish between two different cases and finally see that each of them leads to a contradiction. 

\begin{itemize}
\item \textbf{Case (a)}: $\Sigma\cap \mathcal{V}_R$ is bounded in $\Rmany$ for every $R > 0$.

\item \textbf{Case (b)}:  There exists $R > 0$ s.t. $\Sigma\cap \mathcal{V}_R$ is unbounded in $\Rmany$.
\end{itemize}

\noindent\textbf{Proof for Case (a)}: 
By the definition of $\mathscr{D}_R$, we can fix $R>0$ large enough so that
\begin{equation}\label{distance_between_components}
\dist\lef \partial \Sigma,  \mathscr{D}_R \rig > \pi.
\end{equation}

Since $\mathscr{D}_R\subseteq\Rmany$ has compact vertical projection, there exists an open vertical slab $S\subseteq\Rmany$ between two parallel vertical hyperplanes at distance $\pi$ separating $\partial \Sigma$ and $\mathscr{D}_R$ . More precisely, we can arrange that $\partial \Sigma$ and $\mathscr{D}_R$ are contained in two different connected components of $\R^{n+1} \setminus \overline{S}$. Let now $\Gamma^n:=\Gamma\times\R^{n-1} \subseteq S$  be a grim reaper cylinder.  Let us consider the family $\{ \Gamma^n_s \}_{s \in \R}$ defined via $\Gamma^n_s \coloneqq \Gamma^n + s e_{n+1}$. Note that $\cup_{s\in\R}\Gamma^n_s=S$.

Since in the present case, $\Sigma^n$ is assumed noncompact and hence unbounded (using that it is properly immersed), while $\Sigma\cap \mathcal{V}_R$ is assumed bounded, we surely have $\Sigma\setminus \mathcal{V}_R\neq\emptyset$ regardless of how large we take $R>0$. Seeing as $\Sigma^n$ is connected, we therefore conclude that $\Sigma\cap S\neq \emptyset$. Therefore there also exists $s\in\R$ small enough so that $\lef \Sigma\cap \mathcal{V}_R \rig \cap \Gamma^n_s\neq\emptyset$.

On the other hand, since $\Sigma\cap \mathcal{V}_R$ is assumed bounded, then for $s\in\R$ large enough we have that 
$
\lef \Sigma\cap \mathcal{V}_R \rig \cap \Gamma^n_s = \emptyset$. Because $\Gamma^n$ is properly embedded, and since $\Sigma\cap \mathcal{V}_R$ is assumed bounded, there exists an extremal value $s_0$:
$$
s_0 \coloneqq \sup \{ s \in \R \colon \lef \Sigma\cap \mathcal{V}_R \rig \cap \Gamma^n_s \ne \emptyset \}<\infty.
$$
By compactness of $\overline{\Sigma\cap \mathcal{V}_R}$ hence of $\overline{\Sigma\cap S}$ and since $\Sigma$ is properly immersed, this $s_0$ is attained at some $p_0 \in  \lef \Sigma\cap \mathcal{V}_R \rig \cap \Gamma^n_{s_0}$, where we note that $p_0 \in \overline{S}$. Therefore $p$ is a point of $\Sigma\cap \mathcal{V}_R$ which is interior relative to $\Sigma$. We can therefore apply Separating Tangency from Lemma \ref{tangency_principle}, which by completeness, connectedness and compactness of the boundary implies that $\Sigma$ and $\Gamma\times\R^{n-1}$ coincide outside some ambient ball, leading to a contradiction with f.ex. the assumption that $\Sigma\subseteq H_1\cap H_2$ (or with the boundedness of $\Sigma\cap \mathcal{V}_R$).

\noindent\textbf{Proof for Case (b)}: Let us summarize how we will now fix the setup throughout the rest of the proof: $R>0$ will be taken large enough so that $\partial \Sigma\subseteq \mathcal{V}_R$ and, as we are in Case (b), also taken so large that $\Sigma\cap \mathcal{V}_R$ is unbounded (in particular nonempty).

The proof of Theorem \ref{bi-halfspace} might not work here, because it could be that the function $f$ approaches its supremum only by attaining it on the boundary $\partial \Sigma$. Therefore the idea is to modify $f$ in a suitable way, so that the supremum of the new function is guaranteed to not be attained on $\partial \Sigma$ and also in such a way that the argument in the proof of the ``Bi-Halfspace'' Theorem~\ref{bi-halfspace} still goes through. The resulting argument, using the noncompactness to our advantage, is what we call an ``adiabatic trick'' since it involves tuning a certain length scale as slowly as needed together with estimates for the PDE.

To begin, recall that in the present case, $\Sigma\cap \mathcal{V}_R$ is now assumed to be an unbounded subset of $\Rmany$, so the extrinsic distance to $0\in\Rmany$ is an unbounded function on $\Sigma\cap \mathcal{V}_R$:
\beq\label{p-unbounded}
\sup_{p\in\Sigma\cap \mathcal{V}_R}\|p\|\subRmany=\infty.
\eeq

Since $\partial \Sigma$ is compact, there exists a radius $\rho >0$ large enough so that $\partial \Sigma \subseteq B_\rho(0) = \{x \in \R^{n+1} \colon \|x\|\subRmany \le \rho \}$. For every length scale $\ell > \rho > 0$ (which we soon plan to take as large as needed), let us define the $C^\infty(\Rmany)$ function $\chi_{\ell} \colon \R^{n+1} \to \R$ by
\beq
\chi_\ell (x) = \psi(\|x\|/\ell),
\eeq
where $\psi:[0,\infty)\to\R$ is a standard $C^\infty$ monotone increasing cut-off function $0\leq\psi\leq 1$ such that $\psi|_{[0,1]} \equiv 0$ while $\psi|_{[2,\infty)} \equiv 1$. Thus since  $\ell >\rho>0$ we have that $\chi_\ell$ vanishes inside the ball $B_\rho(0)$ and therefore also on $\partial \Sigma$. Furthermore, all ambient derivatives of $\chi_\ell$ are uniformly bounded with upper bounds depending only on $\ell$ (and of course $\psi$, which we fix once and for all):
\beq\label{bound_chi_ell}
\sup_{x\in\Rmany}\left\|\nabla^{\R^{n+1}}\chi_\ell(x)\right\|_{\Rmany} \leq \frac{C}{\ell}\quad  \text{ and } \quad \sup_{x\in\Rmany}\left|\Delta_{\R^{n+1}}\chi_\ell(x)\right| \leq\frac{C}{\ell^2} .
\eeq

For every $\ell > 0$, let us define the new function $f_\ell \colon \Sigma^n \to \R$ as follows. With $f$ as in Equation \eqref{definition_f} let $M \coloneqq \sup_{\Sigma} f$ and define:
\begin{equation}
f_\ell \lef p \rig \coloneqq f(p) + M \chi_\ell \lef p\rig,\quad p\in\Sigma.
\end{equation}
Note that the continuity and smoothness of $f_\ell$ are no worse than of $f$. Recall from (\ref{f-bounds}) that $f\leq \fracsm{R}{\xi}$ so that $f_\ell$ is also bounded:
\beq
\sup_\Sigma f_\ell \leq \fracsm{R}{\xi} + M <\infty.
\eeq

Also, since $f > R$ on $\Sigma\cap \mathcal{V}_R$ we have by (\ref{p-unbounded}) and by the fact that $\chi_\ell|_{\Rmany\setminus B_{2l}(0)}=1$:
\beq\label{avoid_M}
\forall \ell>\rho:\:\max_{\partial \Sigma} f_\ell \leq M <R + M<\sup_{\Sigma} f_\ell=\sup_{\Sigma\cap \mathcal{V}_R} f_\ell,
\eeq
using for the first equality that $\chi_\ell|_{\partial\Sigma} =0$ and for the last that $\sup_{\Sigma\setminus \mathcal{V}_R} f_\ell \leq R+M$. Thus we can now for each $\ell>\rho$ apply the Omori-Yau argument as in the proof of the ``Bi-Halfspace'' Theorem \ref{bi-halfspace} to the function $f_{\ell}$, this time in the boundary version, now that we by (\ref{avoid_M}) have verified the condition in Lemma \ref{mementomori}(i).

Suppose now that there exists $\ell_0 > 0$ such that there is at least one Omori-Yau sequence $p_k\in\Sigma\cap \mathcal{V}_R$ for $f_{\ell_0}:\Sigma\to\Reals$  with the property that $\|p_k\|\subRmany\to \infty$. Since $\chi_\ell$ is constant outside a compact subset of $\Rmany$, we see $\Delta_\Sigma f(p_k) = \Delta_\Sigma f_\ell(p_k)$ for all sufficiently large values of $k$, so that the argument in (\ref{laplace_contradict}) from the case without boundary applies.

Assume now conversely that for every $\ell > 0$, none of the Omori-Yau sequences have unbounded Euclidean norm. Then in consequence $f_{\ell}$ attains its maximum at some point $q_\ell\in\Sigma\cap \mathcal{V}_R\setminus \partial \Sigma$ so that $f_\ell(q_\ell) = \sup_{\Sigma\cap \mathcal{V}_R} f_\ell$. Note that then in fact $\|q_\ell\|\geq \ell$ must be the case, as follows from Equation (\ref{avoid_M}). Namely, inside $B_\ell(0)$ holds that $\chi_\ell = 0$, so we get $\sup_{B_\ell(0)} f_\ell \leq M < \sup_{\Sigma} f_\ell$ and thus the maximum must be attained outside of $B_\ell(0)$.

Now we do analysis on the sequence of maximum points $\{q_\ell\}$. By criticality we have $\nabla^{\Sigma} f_\ell (q_\ell) = 0$, so by \eqref{bound_chi_ell} and $\nabla^\Sigma \chi_\ell = \frac{1}{\ell}\psi'(\|p\|/\ell)\nabla^\Sigma \|p\|$:
\beq\label{surf_grad}
 \left\|\nabla^{\Sigma} f (q_\ell)  \right\|  = \left\|\nabla^{\Sigma} f_\ell (q_\ell) - M\nabla^{\Sigma} \chi_\ell (q_\ell)\right\| = M\left\| \nabla^{\Sigma} \chi_\ell (q_\ell)\right\| \leq \frac{CM}{\ell},
\eeq
where we also used
\beq\label{p-grad}
\left\|\nabla^\Sigma \|p\|\right\|=\Big\|\big( \nabla^{\R^{n+1}} \|p\| \big)^{\top}\Big\| \leq\big\|\nabla^{\R^{n+1}} \|p\|\big\|=1.
\eeq

As for estimating the Laplacian, we can compute:
\begin{align*}
\Delta_{\Sigma}\|p\| &= \di_{\Sigma} \lef \nabla^\Sigma \|p\| \rig \\
&= \di_{\Sigma} \lef \lef \nabla^{\R^{n+1}} \|p\| \rig^{\top} \rig \\
&=  \di_{\Sigma} \lef  \nabla^{\R^{n+1}}\|p\| -  \lef  \nabla^{\R^{n+1}}\|p\| \rig^{\perp} \rig \\
&= \frac{n}{\|p\|} + H \left\langle  \nabla^{\R^{n+1}}\|p\|, \nu \right\rangle.
\end{align*}

Therefore, since $\Sigma$ is a self-translater and hence $|H|\leq 1$, we get by Cauchy-Schwarz:
\begin{equation}\label{laplacian_distance_on_sigma}
\left|\Delta_{\Sigma}\|p\|\right| \le  \frac{n}{\|p\|} + 1, \quad p\in\Sigma.
\end{equation}

We thus get, using (\ref{p-grad}) and (\ref{laplacian_distance_on_sigma}) with $\|q_\ell\|\geq \ell$ :
\beq
|\Delta_\Sigma \chi_\ell(q_\ell)| \leq \left[\frac{\psi'(\|p\|/\ell)}{\ell}|\Delta_\Sigma \|p\|| + \frac{|\psi''|(\|p\|/\ell)}{\ell^2}\|\nabla^\Sigma \|p\|\|^2\right]_{\mid q_\ell}
\leq \frac{C'}{\ell}.
\eeq

Thus, since $\Delta_\Sigma f_\ell (q_\ell)\leq 0$ we get:
\beq\label{lim_laplace}
\lim_{\ell\to\infty} \Delta_\Sigma f(q_\ell) = \lim_{\ell\to\infty} \Delta_\Sigma f_\ell(q_\ell)-\lim_{\ell\to\infty} \Delta_\Sigma \chi_\ell(q_\ell)\leq 0.
\eeq

Therefore, by (\ref{surf_grad}) and (\ref{lim_laplace}), we can plug the sequence of maximum points $\{q_\ell\}$ directly into the same identity (\ref{main_identity}) derived in the course of the proof of the ``Bi-Halfspace'' Theorem \ref{bi-halfspace} for the $\partial\Sigma=\emptyset$ case, in order to get a contradiction. 

Since, both in Case (1) and in Case (2), we have thus reached a contradiction, we conclude that the  hypersurface $(\Sigma,\partial \Sigma)$ must in fact be compact.
\end{proof}


The following corollary completes the picture given by the ``Bi-Halfspace'' Theorem \ref{bi-halfspace}, providing a complete characterization  of all the possible couples of hyperspaces such that their intersection contains a properly immersed self-translater. In particular it shows that the ``Bi-Halfspace'' Theorem \ref{bi-halfspace} does not not hold anymore if we drop the assumption about the verticality of the halfspaces. 

\begin{corollary}\label{bi-halfspace_corollary}
Let $w_1, w_2 \in \mathbb{S}^n$ 
and let $H_1 \coloneqq H_{(0, w_1)}$ and $H_2 \coloneqq H_{(0, w_2)}$. 

Then there exists a properly immersed   self-translater without boundary contained in $H_1 \cap H_2$  if and only if one of the following conditions hold.
\begin{enumerate} 
\item $\langle w_1, e_{n+1}\rangle  > 0$  and $\langle w_2, e_{n+1} \rangle > 0$;
\item $\langle w_1, e_{n+1}\rangle  > 0$  and $\langle w_2, e_{n+1} \rangle = 0$;
\item $\langle w_1, e_{n+1}\rangle  = 0$  and $\langle w_2, e_{n+1} \rangle > 0$;
\item $\langle w_1, e_{n+1}\rangle  =  \langle w_2, e_{n+1} \rangle =0 $ and $w_1 \parallel w_2$.
\end{enumerate}
\end{corollary}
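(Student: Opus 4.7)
The proof naturally splits into two parts. I would first handle the forward direction by showing that no self-translater can be contained in any of the ``excluded'' intersections, and then verify the backward direction by exhibiting explicit examples in each of the four listed cases.

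\textbf{Necessity.} Suppose a properly immersed self-translater $\Sigma$ without boundary sits inside $H_1 \cap H_2$. The main step is the lemma: if $\langle w, e_{n+1}\rangle = -\alpha < 0$, then $\Sigma \not\subseteq H_{(0,w)}$. Assume for contradiction that $\Sigma \subseteq H_{(0,w)}$, and consider the nonnegative function $\phi(p) := \langle p, w\rangle$ on $\Sigma$ together with its bounded smooth transform $f(p) := 1/(1+\phi(p)) \in (0,1]$. Applying the Omori-Yau principle (Lemma \ref{mementomori}) to $f$ produces a sequence $\{p_k\} \subseteq \Sigma$ with $f(p_k) \to \sup_\Sigma f = 1/(1+\phi^*)$, where $\phi^* := \inf_\Sigma \phi \geq 0$, together with $\nabla^\Sigma f(p_k) \to 0$ and $\limsup \Delta^\Sigma f(p_k) \leq 0$. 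Since $\nabla^\Sigma f = -f^2 w^\top$ and $f \to (1+\phi^*)^{-1} > 0$, one gets $|w^\top|(p_k) \to 0$, so (passing to a subsequence) $\nu(p_k) \to \pm w$. The self-translater equation $\mcv = \langle e_{n+1}, \nu\rangle \nu$ together with $\Delta^\Sigma \phi = \langle \mcv, w\rangle = \langle e_{n+1}, \nu\rangle \langle \nu, w\rangle$ yields $\Delta^\Sigma \phi(p_k) \to -\alpha$ (both sign choices produce the same limit, since the signs cancel). The identity $\Delta^\Sigma f = 2f^3 |w^\top|^2 - f^2 \Delta^\Sigma \phi$ then gives $\Delta^\Sigma f(p_k) \to \alpha(1+\phi^*)^{-2} > 0$, contradicting $\limsup \leq 0$. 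Combining this lemma with Theorem \ref{bi-halfspace} (which rules out transverse vertical halfspaces, forcing $w_1 \parallel w_2$ in the doubly-vertical subcase) pins down one of the conditions (1)--(4).

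\textbf{Sufficiency.} For case (1), a sufficiently large vertical shift $U + T e_{n+1}$ of the bowl soliton lies in $H_1 \cap H_2$, since the quadratic growth of the bowl dominates any linear horizontal term $\langle x^\parallel, w_i^\parallel \rangle$. For case (4): if $w_1 = w_2$, take a vertical hyperplane $\{\langle x, w_1\rangle = c\}$ for any $c \geq 0$; if $w_1 = -w_2$, the intersection $H_1 \cap H_2$ is itself a vertical hyperplane, which is a self-translater. The nontrivial cases (2) and (3) are handled by a tilted grim reaper. After a horizontal rotation (which preserves the translater condition since it fixes $e_{n+1}$), I assume $w_2 = e_1$ and write $w_1 = (a_1, \ldots, a_n, a_{n+1})$ with $a_{n+1} > 0$. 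The candidate is $\Sigma = \graph(u)$ with
\[
u(x_1, \ldots, x_n) = -B^2 \ln\cos\lef \tfrac{x_1 - c}{B} \rig + \sum_{j=2}^n b_j x_j, \qquad b_j := -a_j/a_{n+1}, \quad B := \sqrt{1 + \textstyle \sum_{j=2}^n b_j^2}.
\]
A direct computation shows that $u$ satisfies the graphical translater equation. The choice of $b_j$ makes the free coordinates $x_2, \ldots, x_n$ cancel out of $\langle p, w_1\rangle$, so the condition $\Sigma + T e_{n+1} \subseteq H_1$ reduces to a one-dimensional inequality in $x_1$ of the form $a_1 x_1 - a_{n+1} B^2 \ln\cos((x_1-c)/B) + a_{n+1} T \geq 0$, which holds for $T$ sufficiently large; taking $c \geq \pi B/2$ ensures $\Sigma \subseteq H_2 = \{x_1 \geq 0\}$.

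\textbf{Main obstacle.} The central difficulty is pinpointing the right Omori-Yau test function in the necessity part. The ``obvious'' bounded transforms such as $\arctan \phi$ or $-e^{-\phi}$ fail, because both have their supremum attained in the limit $\phi \to \infty$, and at an Omori-Yau sequence there the weight multiplying the translater contribution $\Delta^\Sigma \phi \to -\alpha$ vanishes, giving $\Delta^\Sigma f(p_k) \to 0$ and no contradiction. The choice $f = 1/(1+\phi)$ is dictated precisely by the requirement that $\sup f$ be attained where $\phi$ is \emph{finite} (namely at $\phi = \phi^*$), so the relevant weight $(1+\phi^*)^{-2}$ stays strictly positive in the limit and isolates the sign of $-\alpha$. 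On the sufficiency side, the main creative step is recognizing the one-parameter family of tilted grim reapers solving the ODE $g'' = (1 + g'^2 + |b|^2)/(1+|b|^2)$: each tilt widens the slab from width $\pi$ to $\pi B$ and produces a linear component $\sum b_j x_j$ that can be tuned to absorb the horizontal components of $w_1$ perpendicular to $w_2$, while preserving the translater property and fitting inside the prescribed halfspaces after a vertical shift.
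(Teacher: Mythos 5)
Your proof is correct, and the sufficiency half matches the paper's strategy (bowl soliton for case (1), tilted grim reaper cylinders for cases (2)--(3), vertical hyperplanes for case (4)), though you give an explicit formula for the tilted reaper and verify the translater equation directly where the paper simply cites the literature for its existence. The genuine divergence is in the necessity direction: the paper rules out $\langle w_i, e_{n+1}\rangle < 0$ by applying the parabolic avoidance/comparison principle (Lemma \ref{comparison_principle}) against a large shrinking sphere placed in $\R^{n+1}\setminus H_i$, whereas you prove the same ``downward-tilted halfspace'' lemma intrinsically via the Omori--Yau principle (Lemma \ref{mementomori}) applied to $f = 1/(1+\langle p, w\rangle)$. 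Your choice of test function is the key point, and your remark about why naive transforms such as $\arctan\phi$ fail is exactly right: one must force $\sup_\Sigma f$ to be approached where $\phi$ stays finite, so that the weight $f^2$ multiplying $\Delta_\Sigma\phi \to \langle e_{n+1}, w\rangle$ does not degenerate. The paper's sphere comparison is shorter and more geometric but invokes the full parabolic machinery; your elliptic Omori--Yau argument is more uniform with the method used for Theorem \ref{bi-halfspace}, effectively giving a one-sided ``halfspace theorem'' for non-vertical halfspaces by the same technique. Both routes are valid.
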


\begin{proof}
Let us first assume that none of the conditions $(1)$, $(2)$, $(3)$ and $(4)$ are satisfied. This means that  $\langle w_1, e_{n+1}\rangle =  \langle w_2, e_{n+1} \rangle = 0$ and $w_1 \nparallel w_2$ or one of the two scalar products is strictly negative. In the first case, we know from the ``Bi-Halfspace'' Theorem \ref{bi-halfspace} that there cannot be  properly immersed  self-translaters contained in $H_1 \cap H_2$. 

Let us assume that one of the two scalar products is strictly negative, say $\langle w_1, e_{n+1} \rangle < 0$. 
Then we claim that $H_1$ cannot contain any  properly immersed self-translater. This, in particular implies that $H_1 \cap H_2$ does not contained any  properly immersed self-translater. Indeed, by contradiction, assume that there exists a  properly immersed self-translater $\Sigma^n \subseteq H_1$. 
Then one can easily find a contradiction by using Lemma~\ref{comparison_principle} and comparing the time evolution of $\Sigma^n$ with the evolution of some suitably large sphere lying in $\R^{n+1} \setminus H_1$.

Let us now check that if any of $(1)$, $(2)$, $(3)$ or $(4)$ hold, then there exists a  properly immersed self-translater contained in $H_1 \cap H_2$. 

If $(1)$ holds, then consider for instance the bowl self-translater $U$. Since $U$ is asymptotic to a paraboloid at infinity, it is clear that, up to a translation in the $e_{n+1}$ direction, $U \subseteq H_1 \cap H_2$.

Let us now assume that $(2)$ or $(3)$ hold.  Without loss of generality, we can assume $H_1 = \{ x_1 \ge 0\}$ and $\langle w_2, e_{n+1}\rangle >0$.  Since we are assuming $\langle w_2, e_{n+1}\rangle >0$, we have that $P_2 \coloneqq \partial H_2$ is the graph of an affine function $f$ defined over  $\{x_{n+1} = 0\}$. More precisely, let $w_2 = (w_{2, 1}, \dots, w_{2, n}, w_{2, n+1})$. Then $f$ is defined as
$$
f(x_1, \dots, x_n) \coloneqq - \frac{x_1 w_{2,1} + x_2  w_{2, n}\dots + x_n w_{2,n}}{w_{2,n+1}}.
$$
For any $L >0$, let us define the slab $S_L \coloneqq (0, L) \times \R^{n-1}$. Note that on $S_L$ the function $f|_{S_L}$ is bounded from above by the function 
$$
g_L(x_1, \dots x_n) \coloneqq L\frac{|w_{2,1}|}{w_{2,n+1}} - \frac{x_2 w_{2,2}\dots + x_n w_{2,n}}{w_{2,n+1}}
$$
 and clearly $\nabla g_L = \frac{1}{w_{2,{n+1}}} (0, w_{2,2}, \dots,  w_{2,n})$. Note that $\nabla g_L$ does not depend on $L$. 
Now take $L$ large enough so that there exists a tilted grim reaper cylinder $\Sigma$ which is the graph of a function defined on $S_L$ and such that it grows linearly in the direction of $\nabla g_L$ and with the same slope of $g_L$ (for a detailed description of tilted grim reaper cylinders, see \cite{gama_martin} and \cite{blt18}). Then, since $\Sigma$ is the graph of a function which is strictly convex w.r.t. the first variable $x_1$, it can be chosen in such a way that it lies above the graph of $g_L$ and, in particular, inside $H_2$. Moreover, by construction, $\Sigma$ is also contained in $H_1$. 

If $(4)$ holds, then observe that $P \coloneqq \partial H_1 = \partial H_2 $ is a translater contained in $H_1 \cap H_2$.
\end{proof}

\section{On the Convex Hulls of Self-Translaters}\label{sec:basics}

In this section we want to study the convex hulls of self-translaters. We will derive a sort of ``convex hull property'' for compact  self-translaters and then we will discuss the classification of the convex hulls of (possibly noncompact) self-translaters with compact boundary, proving Theorem \ref{convex_hull_noncomp_trans}. Those two results have been inspired by the theory of classical minimal submanifolds of the Euclidean space.  They both show that, up to projecting onto the hyperplane $\R^n \times \{0\}$,  the convex hull of a self-translater behaves quite similarly to the convex hull of a minimal submanifold of $\R^{n+1}$.

\subsection{Convex Hulls of Compact Self-Translaters}
The first lemma is a well-known fact about self-translaters and can be proved in several different ways, but, at least to our knowledge, they are all based on some version of the maximum principle. For the sake of completeness we include a proof, close in spirit to an argument given in \cite{pyo}.

\begin{lemma}\label{lemma_boundary}
 Let $(\Sigma^n,\partial\Sigma) $ be a compact $e_{n+1}$-directed self-translater in $\Rmany$.

Then $\partial \Sigma \ne \emptyset$ and 
$$
\max_{\overline{\Sigma}}  x_{n+1} = \max_{\partial\Sigma}x_{n+1}.
$$ 
\end{lemma}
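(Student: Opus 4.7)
The plan is to show that the height function $x_{n+1}$ is subharmonic on $\Sigma$ and then exploit the local geometry at an interior maximum to derive a contradiction, yielding both the nonemptiness of $\partial\Sigma$ and the location of the max simultaneously.

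First, I would compute $\Delta_\Sigma x_{n+1}$. Since $x_{n+1}$ is the restriction of a linear function on $\R^{n+1}$, its ambient Hessian vanishes, and the standard formula (analogous to the computation leading to \eqref{Laplace-f} in the paper, with the Hessian contribution dropped) gives
\[
\Delta_\Sigma x_{n+1} = \langle \mcv, e_{n+1}\rangle.
\]
Plugging in the self-translater equation \eqref{translater_equation}, namely $\mcv = \langle e_{n+1},\un\rangle \un$, this becomes
\[
\Delta_\Sigma x_{n+1} = \langle e_{n+1},\un\rangle^{2} \geq 0,
\]
so $x_{n+1}$ is subharmonic on $\Sigma^n$.

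Next, by compactness of $\overline{\Sigma}$, the function $x_{n+1}$ attains its maximum at some point $p\in\overline{\Sigma}$. Suppose for contradiction that $p$ is an interior point, i.e.\ $p\in\Sigma\setminus\partial\Sigma$. Then $\nabla^{\Sigma} x_{n+1}(p)=0$, which means that the projection of $e_{n+1}$ onto $T_{p}\Sigma$ is zero, hence $e_{n+1}\perp T_{p}\Sigma$ and $\un(p)=\pm e_{n+1}$. In particular $\langle e_{n+1},\un(p)\rangle^{2}=1$, and the computation above forces
\[
\Delta_\Sigma x_{n+1}(p) = 1.
\]
On the other hand, the classical second-derivative test at an interior maximum of a $C^{2}$ function on a Riemannian manifold gives $\Delta_\Sigma x_{n+1}(p)\leq 0$, a contradiction.

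Therefore no interior maximum exists, so every maximum point of $x_{n+1}$ on $\overline{\Sigma}$ lies in $\partial\Sigma$. This immediately gives both conclusions: the maximum must be attained somewhere, so $\partial\Sigma\neq\emptyset$, and $\max_{\overline{\Sigma}} x_{n+1} = \max_{\partial\Sigma} x_{n+1}$. There is no real obstacle here; the only care needed is in verifying the sign convention so that the subharmonicity of $x_{n+1}$ (not superharmonicity) matches the sign of $\mcv$ used in the paper, after which the maximum principle does all the work in one stroke.
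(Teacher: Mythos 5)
Your proof is correct, and the key step differs from the paper's in a pleasant way. Both you and the authors compute $\Delta_\Sigma x_{n+1} = \langle \mcv, e_{n+1}\rangle = \langle e_{n+1},\un\rangle^2 = H^2 \ge 0$, establishing subharmonicity. The paper then invokes the \emph{strong} maximum principle (subharmonic functions have no interior maxima unless constant). You instead notice that at any hypothetical interior maximum $p$ the gradient $\nabla^\Sigma x_{n+1}(p) = e_{n+1}^\top$ vanishes, forcing $\un(p)=\pm e_{n+1}$ and hence $\Delta_\Sigma x_{n+1}(p)=1$ \emph{exactly}, which contradicts the elementary second-derivative test $\Delta_\Sigma x_{n+1}(p)\le 0$. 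This bypasses the strong maximum principle entirely and uses only the weak one, exploiting the special structure of the translater equation: the right-hand side $\langle e_{n+1},\un\rangle^2$ is not merely nonnegative but forced to be $1$ precisely where the gradient vanishes. It is a self-contained and slightly more elementary route to the same conclusion, and it also makes transparent why the lemma is a genuinely translater-specific statement rather than a consequence of subharmonicity alone.
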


\begin{proof}
Recall that given a function $f \in C^1(\R^{n+1})$, the gradient $\nabla^{\Sigma} f|_{\Sigma}$ is given by 
\begin{equation}\label{equation_rest_gradient}
\nabla^{\Sigma} f|_{\Sigma} = \lef \nabla f \rig^{\top},
\end{equation}
where $\lef \nabla f \rig^{\top}$ is the projection of $\nabla f $ on the tangent bundle of $\Sigma$.

If we  apply \eqref{equation_rest_gradient} to the coordinate function $x_{n+1}$, we get
\begin{equation}
\nabla^{\Sigma} x_{n+1} = \sbase_{n+1}^\top.
\end{equation}
Let $\oframe_1, \dots, \oframe_n$ be a orthonormal frame on $\Sigma$ and let $\un$ be a unit normal vector field. 

Then, using \eqref{translater_equation}, we have
\begin{align*}
\Delta_{\Sigma} x_{n+1}
&= \di_{\Sigma}(\sbase^{\top}_{n+1}) =  \di_{\Sigma}(\sbase_{n+1} - \sbase_{n+1}^\perp) \\
&= -\Sigma_{j=1}^n \langle \nabla_{\oframe_j}\langle \sbase_{n+1}, \un \rangle \un, \oframe_j \rangle \\
&= - \langle \sbase_{n+1}, \un\rangle \Sigma_{j=1}^n \langle \nabla_{\oframe_j} \un, \oframe_j \rangle \\
&= H^2.
\end{align*}

Therefore $x_{n+1}$ is a subharmonic function on $\Sigma$, and hence by the strong maximum principle it cannot have any interior maximum points.
\end{proof}

Now let us show a new ``convex hull'' property for self-translaters, in the same spirit as the classical one for minimal hypersurfaces. Let us first remind the reader of the minimal hypersurface case.

\begin{proposition}(See e.g. Proposition 1.9 in \cite{cm-min}). \label{convex_hull_cm}
If $\Sigma^n\subseteq \R^{n+1}$ is a compact minimal hypersurface with boundary, then $\Sigma\subseteq\conv(\partial\Sigma)$, where $\conv(\partial \Sigma)$ is the convex hull of $\partial \Sigma \subseteq \R^{n+1}$.
\end{proposition}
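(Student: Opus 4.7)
The plan is to reduce the statement to the maximum principle applied to linear functions restricted to $\Sigma$. The key analytic input is that for a minimal hypersurface the restriction of any affine function of $\R^{n+1}$ is harmonic with respect to the induced Laplacian $\Delta_\Sigma$. This is the minimal-surface analogue of the subharmonicity computation carried out in Lemma \ref{lemma_boundary} above: for a linear function $\ell_v(x) = \langle x, v\rangle$ one has $\Hess\subRmany \ell_v = 0$, and the standard identity
\[
\Delta_\Sigma \ell_v = \tr_\Sigma \Hess\subRmany \ell_v + \langle \nabla\supRmany \ell_v, \mcv \rangle\subRmany = \langle v, \mcv\rangle\subRmany
\]
vanishes identically because $\mcv \equiv 0$ on a minimal hypersurface.

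First I would recall the standard description of the convex hull as an intersection of supporting halfspaces: for any compact set $K \subseteq \R^{n+1}$,
\[
\conv(K) = \bigcap_{(v,c) \in \mathcal{S}(K)} \{x \in \R^{n+1} : \langle x, v\rangle \leq c\},
\]
where $\mathcal{S}(K)$ is the set of pairs $(v,c) \in \mathbb{S}^n \times \R$ such that $K \subseteq \{\langle x,v\rangle \leq c\}$. Then I would fix such a pair $(v,c)$ with $\partial\Sigma \subseteq \{\langle x,v\rangle \leq c\}$ and study the function $f_v := \ell_v - c$ restricted to the compact manifold with boundary $(\Sigma,\partial\Sigma)$.

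By the harmonicity computation above, $\Delta_\Sigma f_v \equiv 0$ on $\Sigma$. Since $\Sigma$ is compact and $f_v \in C^\infty(\Sigma)$, the strong maximum principle yields $\max_{\overline{\Sigma}} f_v = \max_{\partial \Sigma} f_v \leq 0$, so $\Sigma \subseteq \{\langle x, v\rangle \leq c\}$. Intersecting over all such $(v,c)$ gives $\Sigma \subseteq \conv(\partial\Sigma)$.

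I do not expect any serious obstacle in this proof; the only point that deserves attention is making the harmonicity of linear functions on $\Sigma$ fully explicit (which is immediate from $\mcv = 0$), and observing that the maximum principle applies in the standard form because $\Sigma$ is a compact smooth manifold with boundary. The argument is in fact a direct specialization of the computation in Lemma \ref{lemma_boundary}, with the right-hand side $H^2$ replaced by $0$, which upgrades the conclusion from subharmonicity of a single coordinate to harmonicity of every affine function, and hence to the two-sided convex hull containment.
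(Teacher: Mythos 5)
Your proof is correct, and it is essentially the standard one (the paper does not prove this proposition itself; it cites Proposition 1.9 of Colding--Minicozzi, whose proof is exactly what you wrote): the identity $\Delta_\Sigma (f|_\Sigma) = \tr_\Sigma(\Hess\subRmany f) + \langle \nabla\supRmany f, \mcv\rangle$ specialized to affine $f$ gives harmonicity of linear functions on a minimal hypersurface, and the maximum principle together with the supporting-halfspace description of the convex hull finishes. Your closing observation is also on the mark: this is the $\mcv = 0$ specialization of the computation the authors carry out for self-translaters, where the extra term $\langle \nabla\supRmany f, \mcv\rangle$ is what forces the passage to vertical directions only and to the projected convex hull in Proposition \ref{prop_convex_hull}.
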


Read verbatim, such a statement is ostensibly wrong for self-translaters, as e.g. seen by taking the (compact) pieces of the Altschuler-Wu bowl solution below planes perpendicular to $e_{n+1}$. Nonetheless, we do have the following modified version. We will by $\pi \colon \R^{n+1} \to \R^n$ denote the standard orthogonal projection $\pi(x_1, \dots, x_n, x_{n+1}) \coloneqq (x_1, \dots, x_n)$.


\begin{proposition}\label{prop_convex_hull}
Let $\Sigma^n \subseteq \R^{n+1}$ be a compact $e_{n+1}$-directed self-translater with boundary $\partial \Sigma\neq \emptyset$.

Then
$$
\Sigma \subseteq \conv\lef \pi\lef \partial \Sigma \rig\rig \times (-\infty, \max_{\partial \Sigma} x_{n+1}],
$$ 
where $\conv\lef\pi\lef \partial \Sigma \rig\rig$ is the convex hull of $\pi(\partial \Sigma) \subseteq \R^n$.
\end{proposition}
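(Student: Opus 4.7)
The proposition splits into two independent claims: the horizontal containment $\pi(\Sigma)\subseteq\conv(\pi(\partial\Sigma))$ and the vertical ceiling $\max_\Sigma x_{n+1}\le\max_{\partial\Sigma}x_{n+1}$. The ceiling is immediate from Lemma~\ref{lemma_boundary}, so my plan focuses on the horizontal part.

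Since a convex hull in $\R^n$ is the intersection of all closed affine halfspaces containing it, the horizontal containment is equivalent to the scalar inequality
$$
\max_\Sigma\langle x,w\rangle\;\le\;\max_{\partial\Sigma}\langle x,w\rangle
$$
for every horizontal vector $w\in\R^n\times\{0\}\subseteq\Rmany$. Decomposing the compact $\Sigma$ into its finitely many connected components, each of which is a compact self-translater with non-empty boundary by Lemma~\ref{lemma_boundary}, I would reduce to the case that $\Sigma$ is connected.

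The core of the argument is a separating-tangency comparison against vertical affine hyperplanes, which are themselves trivial self-translaters (flat and containing the direction $e_{n+1}$, so both sides of the translater equation vanish). Assuming for contradiction that the inequality above fails for some horizontal $w$, by compactness the maximum of $\langle\cdot,w\rangle$ on $\Sigma$ is attained at an interior point $p_0\in\Sigma\setminus\partial\Sigma$. Set $P:=\{x\in\Rmany:\langle x,w\rangle=\langle p_0,w\rangle\}$. Criticality of $\langle\cdot,w\rangle$ at $p_0$ forces $T_{p_0}\Sigma=T_{p_0}P$, and locally $\Sigma$ lies on the side $\{\langle x,w\rangle\le\langle p_0,w\rangle\}$ of $P$. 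The Principle of Separating Tangency (Lemma~\ref{tangency_principle}) then applies, yielding an open neighborhood of $p_0$ in $\Sigma$ that coincides with $P$.

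I would finish by propagating this local coincidence to the entire connected $\Sigma$ via a standard open-closed argument: the set of interior points of $\Sigma$ admitting an open neighborhood contained in $P$ is open by Lemma~\ref{tangency_principle} and closed by unique continuation for the quasilinear elliptic translater equation (whose smooth solutions are real analytic, exactly the ingredient already built into the proof of Lemma~\ref{tangency_principle}). Hence $\Sigma\subseteq P$, so $\langle\cdot,w\rangle$ is constant on $\Sigma$ including on $\partial\Sigma$, contradicting the strict inequality between the two maxima. The only delicate point in this plan is the propagation step; everything else is a direct invocation of Lemmas~\ref{lemma_boundary} and~\ref{tangency_principle}.
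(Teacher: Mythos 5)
Your proposal is correct, but it deliberately takes a different route from the paper's main proof, and it is in fact exactly the alternative the authors sketch in the Remark following Proposition~\ref{prop_convex_hull}. The paper's own argument conformally changes to the Huisken--Ilmanen metric $\tilde h = e^{\frac{2}{n}x_{n+1}}\delta$, observes that each horizontal coordinate $x_j$ ($j=1,\dots,n$) then satisfies the degenerate-free linear elliptic PDE
$$
\Delta_{\tilde\Sigma} x_j + \tfrac{2}{n}\,\tilde h\big(\nabla^{\tilde\Sigma}x_{n+1},\nabla^{\tilde\Sigma}x_j\big)=0,
$$
and concludes by the strong maximum principle that $x_j$ has no interior extrema; combining this with Lemma~\ref{lemma_boundary} for the $x_{n+1}$ direction yields the statement. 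You instead invoke the Principle of Separating Tangency (Lemma~\ref{tangency_principle}) against the vertical hyperplane $P$ tangent to $\Sigma$ at a putative interior max of $\langle\cdot,w\rangle$, then propagate the local coincidence $\Sigma\supseteq U\subseteq P$ to all of the connected $\Sigma$. These are two packagings of the same strong maximum principle (as the authors note in the remark), but the PDE route gives a single clean computation in all $n$ horizontal directions at once and avoids the propagation step, whereas your tangency route is more geometric and more uniform with the rest of the paper's toolkit.

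One small quibble on the propagation paragraph: the set $U$ of interior points possessing an open neighbourhood contained in $P$ is open \emph{tautologically}, not because of Lemma~\ref{tangency_principle}; the nontrivial part is closedness, where you actually need to know that a limit point $p$ of $U$ again has $T_p\Sigma = P$ (this follows from continuity of $\nu$, since $\nu\equiv\pm w$ on $U$) \emph{and} that near such a $p$ the surface $\Sigma$ coincides with $P$ on a full neighbourhood --- which does not follow from Lemma~\ref{tangency_principle} alone because local one-sidedness of $\Sigma$ relative to $P$ at $p$ is not a priori known. What closes the argument is the real analyticity of $\Sigma$ (it is a minimal hypersurface in the real-analytic metric $\tilde h$): the analytic function $\langle\cdot,w\rangle - c$ vanishes on the open set $U$ of the connected analytic manifold $\Sigma^\circ$, hence vanishes identically, giving $\Sigma\subseteq P$ directly without the open-closed dichotomy. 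You cite this ingredient correctly, but it is doing more work than your phrasing suggests. The reduction to connected components and the handling of the vertical direction via Lemma~\ref{lemma_boundary} are both fine.
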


\begin{proof}
Let $\tilde{\R}^{n+1} \coloneqq \ilmanen = \lef \R^{n+1}, \tilde{h} \rig$ be the so-called Huisken-Ilmanen space. It  plays an important role due to the following well-known correspondence: $\Sigma^n \subseteq \R^{n+1}$ is a unit speed self-translating surface in the $x_{n+1}$-direction if and only if $\Sigma$ is a minimal submanifold of $\tilde{\R}^{n+1}$. See for instance \cite{sha} for a proof in the case $n = 2$ or \cite{jpg} for the general case.

Observe that given a function $f \in C^1 \lef \R^{n+1} \rig$, the gradient $\tilde{\nabla} f $ of  $f$  w.r.t. the metric  $\tilde{h}$ is given by
\begin{equation}\label{formula_conformal_gradient}
\tilde{\nabla} f = e^{- \frac{2}{n} x_{n+1} } \nabla f.
\end{equation}

We can now compute  $\Delta_{\tilde{\Sigma}}x_j$, for $j = 1, \dots, n$, using \eqref{formula_conformal_gradient} and \eqref{equation_rest_gradient}.
\begin{align*}
\Delta_{\tilde{\Sigma}} x_{j}
&= \di_{\tilde{\Sigma}} \lef  \nabla^{\tilde{\Sigma}} x_j \rig \\
&= \di_{\tilde{\Sigma}} \lef \lef \tilde{\nabla} x_j \rig^T \rig\\
&= \di_{\tilde{\Sigma}} \lef e^{-\frac{2}{n}x_{n+1}} \sbase_{j}^{\top} \rig \\ 
&= -\frac{2}{n} e^{-\frac{2}{n}x_{n+1}} \tilde{h} \lef \nabla^{\tilde{\Sigma}}x_{n+1}, \sbase^{\top}_j \rig  + e^{-\frac{2}{n}x_{n+1}} \di_{\tilde{\Sigma}} \lef \sbase^{\top}_j \rig \\
&= - \frac{2}{n} \tilde{h} \lef \nabla^{\tilde{\Sigma}} x_{n+1}, \nabla^{\tilde{\Sigma}} x_j \rig + e^{-\frac{2}{n} x_{n+1}} \di_{\tilde{\Sigma}}\lef \sbase_j \rig.
\end{align*}

Note that $ \di_{\tilde{\Sigma}}\lef \sbase_j^\top \rig =  \di_{\tilde{\Sigma}}\lef \sbase_j \rig$ because $\tilde{\Sigma}$ is minimal in $\tilde{\R}^{n+1}$. Moreover note that $\di_{\tilde{\Sigma}}\lef \sbase_j \rig = 0$ since $\sbase_j$ is a Killing field on $\tilde{\R}^{n+1}$, for every $j = 1, \dots, n$. Indeed let $\mathcal{L}$ denote the Lie derivative. Then we have
\begin{equation}\label{eq_lie_derivative}
\mathcal{L}_{e_j} \tilde{h} = \mathcal{L}_{e_j} \lef  e^{\frac{2}{n}x_{n+1}} h\rig  =   e^{\frac{2}{n}x_{n+1}} \mathcal{L}_{e_j} h = 0.
\end{equation}

Therefore for  each $j = 1, \dots, n$, the coordinate function $x_j$ satisfies the following linear elliptic PDE:
$$
\Delta_{\tilde{\Sigma}} x_{j} + \frac{2}{n} \tilde{h} \lef \nabla^{\tilde{\Sigma}} x_{n+1}, \nabla^{\tilde{\Sigma}} x_j \rig = 0,\quad j=1,\ldots, n.
$$
From the maximum principle we have that each $x_j$, for $j=1,\ldots,n$, attains its maximum and minimum on $\partial \Sigma$. This, together with Lemma \ref{lemma_boundary}, concludes the proof.
\end{proof}

\begin{remark}
Observe that for the proof of Proposition  \ref{prop_convex_hull} one could alternatively have proven by contradiction that $x_j$, for $j=1,\ldots,n$ has no interior maxima and minima using the Lemma \ref{tangency_principle} and comparing with vertical translating planes.  This is not surprising, since the Principle of Separating Tangency is another manifestation of the strong maximum principle for quasilinear elliptic equations.

Note also that only $x_i$ when $i=1,\ldots, n$ works, and that one could not use $x_{n+1}$ in Proposition \ref{prop_convex_hull}, as the similar computation as in \eqref{eq_lie_derivative} performed for $e_{n+1}$ shows that $e_{n+1}$ is not a Killing field of $\tilde{\Reals}^{n+1}$.
\end{remark}

The ``convex hull'' property provides immediately the following monotonicity of topology for compact self-translaters. 

\begin{corollary}
Let $\Sigma^n \subseteq\R^{n+1}$ be a compact self-translater. Let $C \subseteq\R^n$ be a compact convex set such that $C \cap \pi\lef \partial \Sigma \rig  = \emptyset$, where $\pi$ is the usual projection $\pi \colon (x_1, \dots, x_n, x_{n+1}) \to (x_1, \dots, x_n)$. 

Then the inclusion map $ i \colon \lef C \times \R\rig \cap \Sigma \hookrightarrow \Sigma$ induces an injection on the $(n-1)$-st homology group.
\end{corollary}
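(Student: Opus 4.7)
The plan is to reduce the corollary to a geometric statement about components of $\Sigma$ outside the cylinder $C\times\R$. Set $\Sigma_C:=(C\times\R)\cap\Sigma$, $\Sigma_\mathrm{out}:=\Sigma\cap\bigl((\R^n\setminus\mathrm{int}\,C)\times\R\bigr)$, and $\Sigma_\partial:=\Sigma_C\cap\Sigma_\mathrm{out}=\Sigma\cap(\partial C\times\R)$. A standard argument using the long exact sequence of the pair $(\Sigma,\Sigma_C)$ together with excision (equivalently, the homeomorphism $\Sigma/\Sigma_C\cong\Sigma_\mathrm{out}/\Sigma_\partial$) identifies $H_n(\Sigma,\Sigma_C)$ with $H_n(\Sigma_\mathrm{out},\Sigma_\partial)$, so from the segment
\[
H_n(\Sigma,\Sigma_C)\xrightarrow{\partial_*}H_{n-1}(\Sigma_C)\xrightarrow{i_*}H_{n-1}(\Sigma)
\]
the map $i_*$ is injective as soon as $H_n(\Sigma_\mathrm{out},\Sigma_\partial)=0$. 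Since $\Sigma_\mathrm{out}$ is a compact $n$-manifold whose boundary decomposes as $(\partial\Sigma\cap\Sigma_\mathrm{out})\sqcup\Sigma_\partial$, this top relative homology is freely generated by those connected components of $\Sigma_\mathrm{out}$ that are disjoint from $\partial\Sigma$. Hence it is enough to prove that every connected component $K$ of $\Sigma_\mathrm{out}$ meets $\partial\Sigma$.

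Suppose for contradiction that some component $K$ of $\Sigma_\mathrm{out}$ satisfies $K\cap\partial\Sigma=\emptyset$. Then $K$ is itself a compact self-translater with boundary entirely contained in $\Sigma_\partial$, so $\pi(\partial K)\subseteq\partial C\subseteq C$. Applying Proposition \ref{prop_convex_hull} to $K$ gives $\pi(K)\subseteq\conv(\pi(\partial K))\subseteq C$, while by construction $\pi(K)\subseteq\R^n\setminus\mathrm{int}\,C$. These two containments force $\pi(K)\subseteq\partial C$.

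Now fix any interior point $p\in K$ and let $v\in\R^n$ be an outward normal to a supporting hyperplane of $C$ at $\pi(p)\in\partial C$, so that $\langle x,v\rangle\le\langle\pi(p),v\rangle$ for every $x\in C$. As a linear combination of the coordinate functions $x_1,\ldots,x_n$, the function $\ell:=\sum_{j=1}^n v_j x_j$ restricted to any self-translater satisfies the same homogeneous linear elliptic PDE derived in the proof of Proposition \ref{prop_convex_hull}. Since $\pi(K)\subseteq C$ we have $\ell\le\ell(p)$ on $K$ with equality at the interior point $p$, so the strong maximum principle forces $\ell\equiv\ell(p)$ on $K$; that is, $K$ is contained in the vertical affine hyperplane $\Pi:=\{y\in\R^{n+1}:\langle\pi(y),v\rangle=\langle\pi(p),v\rangle\}$. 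Because $\Pi$ is itself a self-translater, Lemma \ref{tangency_principle} combined with unique continuation propagates this local coincidence to the entire connected component $\Sigma_0$ of $\Sigma$ containing $K$, giving $\Sigma_0\subseteq\Pi$. But $\pi(\Pi)$ supports $C$, so $\Pi\cap(\mathrm{int}\,C\times\R)=\emptyset$ and hence $\Sigma_0\subseteq\Sigma_\mathrm{out}$; maximality of $K$ as a component of $\Sigma_\mathrm{out}$ then forces $\Sigma_0=K$, so $K$ is a closed, boundaryless, compact $n$-submanifold of the hyperplane $\Pi$. A clopen argument inside the connected noncompact set $\Pi$ now gives the desired contradiction.

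The most delicate step to justify rigorously is the excision identification $H_n(\Sigma,\Sigma_C)\cong H_n(\Sigma_\mathrm{out},\Sigma_\partial)$, which requires some care with the manifold-with-corners structure of $\Sigma_\mathrm{out}$ (one can resolve this either by slightly thickening $C$ to a smooth convex set and passing to the limit, or by invoking a Lefschetz-duality computation after smoothing $\partial C$). The geometric heart of the argument --- using Proposition \ref{prop_convex_hull} to squeeze $\pi(K)$ onto $\partial C$ and then the strong maximum principle to flatten $K$ into a vertical affine hyperplane --- is short and transparent.
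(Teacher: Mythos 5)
Your proof is correct and follows essentially the same route the paper intends by its citation of Colding--Minicozzi's Lemma~1.11: reduce, via the long exact sequence of $(\Sigma,\Sigma_C)$ and excision, to ruling out components of $\Sigma$ outside $C\times\R$ that miss $\partial\Sigma$, and then kill such a component using Proposition~\ref{prop_convex_hull} together with the strong maximum principle (or, more quickly, Lemma~\ref{lemma_boundary} once you have shown $K$ is a compact boundaryless component of $\Sigma$). The excision caveat you flag is the right technical point to address, and can be handled by replacing $C$ with a nearby smooth convex body whose vertical cylinder meets $\Sigma$ transversally.
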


\begin{proof}
The proof is very similar to the one of Lemma 1.11 in \cite{cm-min}. 
\end{proof}


\subsection{Convex Hulls of Noncompact Self-Translaters}

Note that the results in the preceding section were all about compact self-translaters. We will now study the convex hull property in the noncompact case (Theorem~\ref{convex_hull_noncomp_trans}).   Also, as mentioned in the introduction, this result was inspired by the classical result for minimal submanifolds in Euclidean space proved by Hoffman and Meeks in \cite{hoffman-meeks} that we recall here.

\begin{theorem}[Hoffman-Meeks: Theorem 3 in \cite{hoffman-meeks}]\label{hoffman_meeks_theorem}
Let $\Sigma^n \subseteq \R^{n+1}$ be a properly immersed connected minimal submanifold whose (possibly empty) boundary $\partial \Sigma$ is compact.
Then exactly one of the following holds:
\begin{enumerate}
\item $\conv(\Sigma) = \R^{n+1}$,
\item $\conv(\Sigma)$ is a halfspace,
\item $\conv(\Sigma)$ is a closed slab between two parallel hyperplanes,
\item $\conv(\Sigma)$ is a hyperplane,
\item $\conv(\Sigma)$ is a compact convex set. This case occurs precisely when $\Sigma$ is compact.
\end{enumerate}
Moreover, when $n = 2$, $\partial \Sigma$ has nonempty intersection with each boundary component of $\conv(\Sigma)$.
\end{theorem}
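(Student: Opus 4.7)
The overall plan is to reduce Theorem \ref{hoffman_meeks_theorem} to a ``bi-halfspace theorem for minimal hypersurfaces'' in the spirit of Theorems \ref{bi-halfspace}--\ref{bi-halfspace_boundary}, then conclude cases (1)--(5) by a pure convex-geometric dichotomy, and finally handle the ``moreover'' clause for $n=2$ via the classical catenoid barriers of \cite{hoffman-meeks}. For the first step I claim: if $N^n\subseteq\R^{n+1}$ is a properly immersed connected minimal hypersurface with compact (possibly empty) boundary and $N\subseteq H_1\cap H_2$ for two transverse closed halfspaces of $\R^{n+1}$, then $N$ is compact. In the boundary-free case this follows from Theorem \ref{bi-halfspace} by the product trick: $N\times\R\subseteq\R^{n+2}$ is a properly immersed boundary-free self-translater in the $e_{n+2}$-direction (both $\mcv$ and $\langle e_{n+2},\nu\rangle$ vanish identically), contained in the two transverse vertical halfspaces $H_i\times\R$ of $\R^{n+2}$. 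For compact nonempty boundary the product trick fails (as $\partial(N\times\R)$ is noncompact), so I would re-run the proof of Theorem \ref{bi-halfspace_boundary}. This adaptation is \emph{simpler} than the self-translater version because $\mcv\equiv 0$ kills the perturbation term $-\langle\nabla\supRmany d_R,\nu\rangle\langle e_{n+1},\nu\rangle$ in (\ref{main_identity}); in particular verticality of $H_1,H_2$ plays no role. The Omori--Yau principle applies to $N$ since properly immersed hypersurfaces of bounded mean curvature satisfy it (see \cite{PRS05}), and the contradiction $\lim_{k\to\infty}\Delta_N f(p_k)\geq\xi/R>0$ against (\ref{OY_prop3}) proceeds exactly as in (\ref{laplace_contradict}). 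The adiabatic cut-off $\chi_\ell$ of Case (b) needs no modification. In Case (a) the grim reaper cylinder is replaced by an affine hyperplane (itself minimal): connectedness of $N$, its noncompactness, and compactness of $\partial N$ together force $N$ to cross any hyperplane separating $\partial N$ from $\mathscr{D}_R$, and a first-contact translation against $N$ yields an interior tangency excluded by Lemma \ref{tangency_principle}, whose proof invokes only the strong maximum principle and unique continuation and thus applies verbatim to minimal surfaces.

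\textbf{Step 2 (Convex-hull dichotomy).} Set $K:=\conv(\Sigma)\subseteq\R^{n+1}$. If $\Sigma$ is compact then $K$ is compact convex, giving case (5); conversely, if $K$ is bounded then $K$ fits inside a cube, hence inside the intersection of two transverse closed halfspaces, so Step 1 forces $\Sigma$ compact, establishing the equivalence claimed in (5). Assume henceforth $\Sigma$ noncompact. Since any pair of transverse closed halfspaces containing $K$ also contains $\Sigma$, Step 1 rules out $K$ being contained in any such pair. The \emph{barrier cone} $B(K):=\{w\in\R^{n+1}:\sup_K\langle\cdot,w\rangle<+\infty\}$ is a convex cone, and an elementary convex-geometric fact shows that $B(K)$ contains two $\R$-linearly independent vectors if and only if $K$ is contained in the intersection of two transverse closed halfspaces. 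Hence $\dim B(K)\leq 1$, whence one of: $B(K)=\{0\}$, forcing $K=\R^{n+1}$ (case (1)); $B(K)$ a single ray from $0$, forcing $K$ a closed halfspace (case (2)); or $B(K)$ a line through $0$, forcing $K$ either a closed slab between parallel hyperplanes or a single hyperplane (cases (3) and (4)).

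\textbf{Step 3 (The $n=2$ ``moreover'').} In dimension $n=2$ I must show that in cases (2) and (3), each connected component $P$ of the topological boundary $\partial K\subseteq\R^3$ meets $\partial\Sigma$. Assume not: by compactness of $\partial\Sigma$, $\delta:=\dist(\partial\Sigma,P)>0$, while $P$ being a support plane of $K$ forces $\Sigma$ to accumulate on $P$ from the interior side. I would then invoke the classical halfspace-theorem argument of Hoffman--Meeks \cite{hoffman-meeks}: a catenoidal barrier in $\R^3$, positioned with its relevant part inside the one-sided neighborhood $\{0<\dist(\cdot,P)<\delta/2\}$ of $P$ so as to be disjoint from $\partial\Sigma$, is swept toward $P$ from the exterior of $K$ until it achieves first contact with $\Sigma$; by disjointness from $\partial\Sigma$ this contact is interior, and the tangency principle (Lemma \ref{tangency_principle}) for two one-sided minimal surfaces delivers the contradiction.

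The main obstacle is the Case (a) barrier argument in Step 1 with compact boundary: in the absence of the grim reaper one must verify that an affine-hyperplane sweep genuinely produces the required interior tangency, which crucially uses that verticality of the halfspaces is not imposed in the minimal-surface setting, so one has full rotational freedom in $\R^{n+1}$ to orient the separating hyperplane. Step 3 is the other delicate point, and is where the restriction $n=2$ genuinely enters, via the Hoffman--Meeks catenoid trick, which has no direct higher-dimensional analogue.
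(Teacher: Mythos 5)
First, a point of comparison: the paper does not actually prove Theorem \ref{hoffman_meeks_theorem} --- it is quoted from \cite{hoffman-meeks} --- and the only portion it reproves is the boundary-free classification (Corollary \ref{corr-hm}), via precisely your product trick ($N\times\R$ is a boundary-free translater one dimension up, contained in two transverse \emph{vertical} halfspaces) followed by Theorem \ref{convex_hull_noncomp_trans}. Your Step 2 (barrier-cone dichotomy) is correct, the boundary-free half of Step 1 is correct and matches the paper, Case (b) of the compact-boundary argument does simplify as you say since $\mcv\equiv 0$ kills the perturbation term, and Step 3 is an acceptable appeal to the classical catenoid sweep behind the Hoffman--Meeks halfspace theorem.

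The genuine gap is Case (a) of Step 1: an affine hyperplane cannot replace the grim reaper cylinder. The grim reaper barrier works because it is a complete soliton \emph{confined to the slab} $S$ and bounded below in the sweep direction, so that $\Gamma^n_s\subseteq\{x_{n+1}\geq s\}$ is eventually disjoint from the bounded set $\Sigma\cap S$ (giving a finite extremal parameter), and at the contact point, which lies in the open slab, \emph{all} of $\Sigma$ nearby lies in $\mathcal{V}_R$, hence on one side of the barrier. A hyperplane has neither property: whatever its orientation, a parallel family of hyperplanes foliates all of $\Rmany$, and since $N$ is unbounded while you only control the bounded piece $N\cap\mathcal{V}_R$, either no member of the family is disjoint from $N$ (in Case (a) the connected $N$ runs from $\partial N$ through $\mathscr{D}_R$ off to infinity, so it genuinely \emph{crosses} every hyperplane separating $\partial N$ from $\mathscr{D}_R$ --- the opposite of what a first-contact sweep requires), or the extremal contact with $N\cap\mathcal{V}_R$ occurs where $N$ has other sheets outside $\mathcal{V}_R$ on the wrong side, so no one-sided interior tangency is produced. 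Put differently, a successful hyperplane sweep is equivalent to a linear function attaining an interior extremum on $N$; harmonicity plus the maximum principle forbids such an extremum but does not manufacture one (the infimum of $\langle x,w_1\rangle$ on $N$ may be approached only at infinity or on $\partial N$). This is not a cosmetic defect: for $n\geq 3$ the $n$-dimensional catenoid is a properly embedded boundary-free minimal hypersurface contained in a slab, so any proof of the transverse bi-halfspace statement must exploit transversality through a barrier adapted to the wedge --- Hoffman--Meeks's Plateau-problem graphs, or catenoid pieces --- and affine hyperplanes cannot do this. Until Case (a) is repaired, the compact-boundary case of Step 1, and with it the classification for $\partial\Sigma\neq\emptyset$, remains unproven. (A smaller point, inherited from the statement itself: $\conv(\Sigma)$ need not be closed, so your separation arguments in Step 2 classify $\overline{\conv(\Sigma)}$; one should either work with the closed convex hull throughout or add the interior-point argument showing $K\supseteq\operatorname{int}\overline{K}$.)
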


Recall again that from the known examples (see Section \ref{sec:prelims}), we cannot hope to have the same characterization of the convex hulls of self-translaters. But we can characterize the convex hull of the projection onto the hyperplane $\R^n \times \{0\}$. This is the content of Theorem \ref{convex_hull_noncomp_trans} and the proof is based on the ``Bi-Halfspace'' Theorem \ref{bi-halfspace}.

\begin{remark}
Note that the last statement of Theorem \ref{hoffman_meeks_theorem}, which follows from the Halfspace Theorem (Theorem 1 in \cite{hoffman-meeks}), does not have a straightforward equivalent in the context of self-translaters. Indeed it is natural to ask if it is true or not that given a connected, properly immersed, $2$-dimensional self-translater $\Sigma^2 \subseteq \R^3$ with compact boundary, $\pi \lef \partial \Sigma\rig$ has nonempty intersection with each topological boundary component of $\conv \lef  \pi\lef \Sigma \rig\rig$. The answer is negative. Indeed one can easily build a counterexample by taking as $\Sigma$ a grim reaper cylinder with a compact set removed. 
\end{remark}


Before giving the proof of Theorem \ref{convex_hull_noncomp_trans}, let us first prove the following simple characterizations of compact self-translaters.

\begin{lemma}[Characterization of Compact Self-Translaters]\label{lemma_ceiling}
Let $(\Sigma^n,\partial\Sigma)$ be a properly immersed, connected self-translater with compact boundary.  Then the following are equivalent. 

\begin{enumerate}
\item $\Sigma$ is compact.
\item $\sup_{\Sigma} x_{n+1} <  \infty$.
\item $\Sigma$  is contained in a cylinder of the kind $K \times \R$, where $K \subseteq \R^n$ is a  compact set.
\end{enumerate}
\end{lemma}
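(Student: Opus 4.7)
My plan is to establish the cyclic chain $(1) \Rightarrow (2) \Rightarrow (3) \Rightarrow (1)$. The implication $(1) \Rightarrow (2)$ is immediate from compactness. The implication $(3) \Rightarrow (1)$ is Corollary \ref{corollary_cylinders}: since a compact $K \subseteq \R^n$ lies in the intersection of two transverse open halfspaces of $\R^n$, the cylinder $K \times \R$ fits inside two transverse vertical halfspaces of $\R^{n+1}$, and Theorem \ref{bi-halfspace_boundary} yields compactness of $\Sigma$.

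The nontrivial step $(2) \Rightarrow (3)$ I would prove by contradiction via a shrinking-sphere comparison. Suppose $M := \sup_\Sigma x_{n+1} < \infty$ while $\pi(\Sigma) \subseteq \R^n$ is unbounded. Fix $\rho$ with $\partial \Sigma \subseteq B_\rho(0)$ and pick $p_k \in \Sigma$ with $|\pi(p_k)| \to \infty$ (which lie in the interior of $\Sigma$ for $k$ large). Set $y_k := M - x_{n+1}(p_k) \geq 0$, pick $R_k > 2n$ large enough that $R_k^2/(2n) - R_k > y_k$ (e.g.\ $R_k = 2n + y_k + 1$), let $t^*_k := R_k^2/(2n)$, and place the round sphere $S_k \subseteq \R^{n+1}$ of radius $R_k$ centered at $c_k := p_k + t^*_k e_{n+1}$. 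The lowest point of $S_k$ is at height $M + (t^*_k - R_k - y_k) > M$, so $S_k$ is disjoint from $\Sigma \subseteq \{x_{n+1} \leq M\}$ at $t = 0$; by further requiring $|\pi(p_k)| > \rho + R_k$, the sphere (whose center is fixed at $c_k$) and all its MCF shrinkings avoid the translated compact boundary $\partial \Sigma + t e_{n+1}$ for every $t \in [0, t^*_k)$.

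Under MCF, $\Sigma$ translates as $\Sigma_t = \Sigma + t e_{n+1}$, while $S_k$ shrinks with radius $r_{k,t} = \sqrt{R_k^2 - 2nt} = \sqrt{2n(t^*_k - t)}$ about $c_k$, disappearing at $t^*_k$. Lemma \ref{comparison_principle} gives $\dist(\Sigma_t, S_{k,t}) \geq d_{0,k} > 0$ on $[0, t^*_k)$. On the other hand the point $p_k + t e_{n+1} \in \Sigma_t$ has distance $|p_k + t e_{n+1} - c_k| = t^*_k - t$ from $c_k$, which equals $r_{k,t}$ precisely when $t^*_k - t = 2n$; at the positive time $t = t^*_k - 2n$ this point therefore lies on $S_{k,t}$, forcing $\dist(\Sigma_t, S_{k,t}) = 0$ and contradicting the lower bound. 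Hence $\pi(\Sigma)$ must be bounded, $(3)$ holds with $K := \overline{\pi(\Sigma)}$, and the cycle closes. The main delicate point is to verify the hypotheses of Lemma \ref{comparison_principle} despite the presence of compact boundary, which the choice $|\pi(p_k)| \gg \rho + R_k$ handles by keeping the shrinking sphere uniformly far from the translating boundary throughout $[0, t^*_k)$.
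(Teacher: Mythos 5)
Your proof is correct but follows a genuinely different route from the paper's in both of its nontrivial implications. For $(3)\Rightarrow(1)$ the paper does \emph{not} invoke Corollary \ref{corollary_cylinders} or the Omori--Yau machinery behind Theorem \ref{bi-halfspace_boundary}; instead it argues directly with the family $\{W_{R,0}\}_{R>0}$ of winglike translaters parametrized by neck radius (after normalizing $\partial\Sigma\subseteq\{x_{n+1}\le -1\}$), sweeping down to a largest $R$ with $W_{R,0}\cap\Sigma\ne\emptyset$ and applying the Principle of Separating Tangency (Lemma \ref{tangency_principle}). For $(2)\Rightarrow(3)$ the paper again uses winglike translaters, this time vertical translates $W_{R,s}$ of a fixed $W_R$ with $\pi(\partial\Sigma)\subseteq B_R(0)$, taking the extremal $s_0$ and contradicting Lemma \ref{tangency_principle}. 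You replace both of these elliptic barrier arguments with a parabolic one: shrinking round spheres and the MCF comparison principle (Lemma \ref{comparison_principle}), calibrated so the sphere collapses before the translating $\Sigma$ can escape it. This trades one set of inputs for another. Your $(2)\Rightarrow(3)$ needs only round spheres rather than the existence/structure of translating catenoids from \cite{CSS}, and the algebra $t^*_k - t = 2n$ is pleasantly clean; but it relies on a comparison principle stated in the paper for boundaryless $M_1$, so you must --- as you do --- make the boundary argument explicit (the safety margin $|\pi(p_k)|>\rho+R_k$ prevents the first touching point from lying on $\partial\Sigma_t$, which is the form of the comparison one really uses, namely that the distance cannot reach zero). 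Your $(3)\Rightarrow(1)$ is shorter than the paper's but imports the full strength of the bi-halfspace theorem with boundary, whereas the paper's barrier argument keeps Lemma \ref{lemma_ceiling} logically independent of the Omori--Yau machinery. Both routes are valid; the paper's is self-contained at the tangency-principle level, yours is leaner if one is willing to cite the earlier theorems.
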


\begin{proof}[Proof of Lemma \ref{lemma_ceiling}]
$(1) \Rightarrow (2)$. If $\Sigma$ is compact, then clearly  $\sup_{\Sigma} x_{n+1} <~\infty$. 

$(2) \Rightarrow (3)$.  Let us assume that  $\sup_{\Sigma} x_{n+1} <  \infty$.   Let $R > 0$ be a radius large enough such that $\pi \lef \partial \Sigma \rig \subseteq B_R(0)$, where $B_R(0)$ is the ball of radius $R>0$ in $\R^{n} \times \{0\}$, centered in $0$.

Let us consider the winglike self-translaters $W_R$ from \cite{CSS}, which we translate so that $\inf_{p\in W_R} x_{n+1}(p)=0$. Let us define  the one-parameter family of wing-like self-translater $\{W_{R, s} \}_{s \in \R}$, where $W_{R, s} \coloneqq W_R + s\, e_{n+1}$. Clearly we have that 
\begin{equation}\label{intersection_wing_sigma}
W_{R, s} \cap \Sigma = \emptyset,
\end{equation}
for every $s >\sup_\Sigma x_{n+1}$.   
 Assume by contradiction that  there exists $s \in \R$ such that $W_{R, s} \cap \Sigma \ne \emptyset$. 
Since $\Sigma$ is properly immersed, there exists 
$$
s_0 \coloneqq \max \{ s \in \R \colon W_{R, s} \cap \Sigma \ne \emptyset \}.
$$
This leads to a contradiction, thanks to Lemma \ref{tangency_principle}. 
Therefore  \eqref{intersection_wing_sigma} holds for every $s \in \R$ and thus $\Sigma$ is contained in the cylinder $B_R(0) \times \R$.

$(3) \Rightarrow (1)$ Let us assume that $\Sigma \subseteq K \times \R$, for some compact set $K \subseteq \R^{n}$.  
Let us assume by contradiction that $\Sigma$ is not compact. This implies that $\sup_{\Sigma}x_{n+1} = \infty$ or $\inf_{\Sigma}x_{n+1} = -\infty$. Let us consider the first case (the other case is similar).  

Since $\partial \Sigma$ is compact, we can assume w.l.o.g.  that $\partial \Sigma \subseteq \{x_{n+1} \le -1\}$. For every $R >0$, let $W_{R, 0}$ be the winglike self-translater with neck size $R>0$ and such that $\min_{W_{R, 0}} x_{n+1} = 0$. Let us consider the family $\{W_{R, 0} \}_{R> 0}$. Note the difference with the winglike self-translaters family above: now the ``height'' is fixed and $R>0$ is a parameter. 

Observe that $W_{R, 0} \cap \lef K \times \R\rig = \emptyset $ for $R>0$ large enough. Therefore $W_{R, 0}  \cap \Sigma = \emptyset$, for $R>0$ large enough. On the other hand, since $\Sigma$ is connected and since  $\sup_{\Sigma}x_{n+1} = \infty$, there exists $r > 0$ small enough such that $W_{r, 0} \cap \Sigma \ne \emptyset $. Since $\Sigma$ is properly immersed, there exists
$$
r_{0} \coloneqq \max \{ r > 0 \colon W_{r, 0} \cap \Sigma \ne \emptyset \}.
$$
Note that since $\partial \Sigma \subseteq \{x_{n+1} \le -1\}$ every point in the intersection $W_{r_0, 0} \cap \Sigma$ is an interior point. This contradicts Lemma \ref{tangency_principle}.
\end{proof}


\begin{proof}[Proof of Theorem \ref{convex_hull_noncomp_trans}]\label{general_proof}
First of all, observe that the ``if and only if'' part in Theorem \ref{convex_hull_noncomp_trans}'s Case (5) follows directly from Lemma \ref{lemma_ceiling}.

Take $\Sigma^n\subseteq\Rmany$ possibly with compact boundary $\partial \Sigma$. The vertical projection of the convex hull of $\Sigma^n$, or equivalently convex hull of the vertical projection, can be written as the intersection of all vertical halfspaces in $\Rmany$ which contain it:
\beq\label{vertichull}
\conv (\pi( \Sigma))\quad =\quad\bigcap_{\left\{H:\: \Sigma\subseteq H \:\mathrm{vertical\:halfspace\:of\:}\Rmany\right\}} \hspace{-52pt}\pi(H)\quad\quad\quad\subseteq \Rfew.
\eeq
If the index set is empty we get $\conv (\pi( \Sigma)) =\Rfew$ and arrive at Case (1). So, we assume now that this is not the case.

We will now deduce that in the intersection (\ref{vertichull}) all the involved halfspaces $H\subseteq\Rmany$, and hence all the $\pi(H)\subseteq\Rfew$, are in fact (anti-)parallel halfspaces, unless we are in Case (5). Namely, let $H_1$ and $H_2$ be any two vertical closed halfspaces of $\R^{n+1}$, i.e. such that $P_1 \coloneqq \partial H_1$ and $P_2 \coloneqq \partial H_2$ are two hyperplanes both containing $e_{n+1}$, and with $\Sigma^n \subseteq H_1 \cap H_2$. Then if $H_1$ and $H_2$ were not (anti-)parallel, the compact boundary version of the ``Bi-Halfspace'' Theorem \ref{bi-halfspace_boundary} would imply that $\Sigma^n$ is compact (and note that necessarily $\partial \Sigma\neq\emptyset$ too), so that we would arrive at Case (5).

We may thus finally assume that we are not in Case (1) nor in Case (5). Since all vertical halfspaces in $\Rmany$ which contain $\Sigma^n$ are then mutually (anti-)parallel, so are all the $(n-1)$-dimensional hyperplanes $\pi(H)$ in $\Rfew$ and the intersection in (\ref{vertichull}) is now easy to evaluate: One of the Cases (2), (3) or (4) must occur. This concludes the proof of Theorem \ref{convex_hull_noncomp_trans}.
\end{proof}

\begin{remark}

Even though Theorem \ref{convex_hull_noncomp_trans} was inspired by Theorem \ref{hoffman_meeks_theorem}, our proof is quite different from the original proof of Hoffman and Meeks in \cite{hoffman-meeks}. 

First of all, observe  that  the ``if and only if'' of point $(5)$ in Theorem \ref{hoffman_meeks_theorem} is trivial, but one implication of the  ``if and only if'' of point $(5)$ in Theorem \ref{convex_hull_noncomp_trans} is not completely obvious.  

But the most important difference is that the proof of Hoffman and Meeks is an elaborate application of the maximum principle for the nonlinear minimal hypersurface equation, while our proof is based on the Omori-Yau maximum principle. 

In the Appendix \ref{appendix} we provide an alternative proof of Theorem \ref{convex_hull_noncomp_trans} in the case $n=2$ which is based on Lemma \ref{tangency_principle} and it is closer in spirit to the original proof of Hoffman and Meeks. We also explain why it is hard to extend it to higher dimension.
 \end{remark}


\section{Appendix}\label{appendix}

In this appendix we present an alternative  proof of Theorem \ref{convex_hull_noncomp_trans}, which works only in the case $n=2$. 

Before presenting the proof, let us recall the following simple property about winglike self-translaters.
\begin{lemma}\label{auxiliary_lemma}
Let $R > 0$ and let $W_R \subseteq \R^{n+1}$ be the wing-like self-translater as in \cite{CSS} and \cite{niels}. Let us denote by $R^*> R$ the radius at which the coordinate function $x_{n+1}$ attains the minimum on $W_R$. 

Then
$$
R^* - R \le \frac{\pi}{2}.
$$
\end{lemma}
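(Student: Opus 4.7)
The plan is to reduce the statement to an elementary ODE comparison between the profile curve of $W_R$ and the planar grim reaper curve. Since $W_R$ is rotationally symmetric about the $x_{n+1}$-axis, it is generated by a planar profile curve in the half-plane $\{(r,x_{n+1}):r\ge 0\}$, where $r=\sqrt{x_1^2+\cdots+x_n^2}$. I would parametrize this curve by arclength $s$ starting at the neck, on the branch along which $x_{n+1}$ initially decreases, so that $r(0)=R$ and the tangent at $s=0$ is vertical pointing downward. Writing $r'(s)=\cos\theta(s)$ and $x'_{n+1}(s)=\sin\theta(s)$, where $\theta(s)$ is the angle of the tangent with the positive $r$-axis, we have $\theta(0)=-\pi/2$. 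A direct computation of the principal curvatures of a surface of revolution (one equal to the signed curvature $\theta'(s)$ of the profile, $n-1$ equal to $\sin\theta/r$ coming from the revolution circles) reduces the translater equation $H=\langle \nu,e_{n+1}\rangle$ to the scalar ODE
\[
\theta'(s) \;=\; \cos\theta(s) \;-\; (n-1)\,\frac{\sin\theta(s)}{r(s)}.
\]

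Next, I would identify $s^{*}$ as the arclength where the profile has horizontal tangent. At the neck, $\theta'(0)=(n-1)/R>0$; moreover, whenever $\theta(s)\in(-\pi/2,0)$ both $\cos\theta>0$ and $-\sin\theta/r>0$, so $\theta'(s)>0$ and $\theta$ is strictly increasing on this range. By the known asymptotic structure of $W_R$ (see \cite{CSS} and \cite{niels}), $\theta$ must reach $0$ at some finite $s^{*}>0$; at this point $x'_{n+1}(s^{*})=\sin 0=0$, and since $\theta'(s^{*})=1>0$ the function $\theta$ continues to increase past $s^{*}$, so $x_{n+1}$ attains its minimum along $W_R$ precisely at $s=s^{*}$. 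Thus $R^{*}=r(s^{*})$.

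The comparison is now one line. For $s\in(0,s^{*})$ we have $\sin\theta(s)<0$, so the ODE yields $\theta'(s)\ge \cos\theta(s)$, and integrating
\[
R^{*}-R \;=\; \int_{0}^{s^{*}}\cos\theta(s)\,ds \;\le\; \int_{0}^{s^{*}}\theta'(s)\,ds \;=\; \theta(s^{*})-\theta(0) \;=\; \frac{\pi}{2}.
\]
Conceptually, dropping the rotational term $(n-1)\sin\theta/r$ recovers exactly the curvature equation $\theta'(s)=\cos\theta(s)$ of the planar grim reaper curve, whose tangent angle sweeps from $-\pi/2$ to $0$ over an $r$-displacement of exactly $\pi/2$; the rotational term has the favorable sign on $(0,s^{*})$, which accelerates $\theta$ and forces $W_R$ to reach its minimum-$x_{n+1}$ point at a strictly smaller $r$-displacement (so in fact the inequality is strict). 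The main place where care is needed is the derivation of the profile ODE with the correct sign for the $(n-1)$ term, which requires a consistent choice of outward unit normal along the branch being traversed; once that is in hand, the grim-reaper comparison delivers the bound immediately.
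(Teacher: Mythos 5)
Your proof is correct. Note that the paper does not give a self-contained argument for this lemma---it simply defers to Lemma~2.1 of \cite{niels}---so there is no in-text proof to compare against line by line. Your argument is a clean reconstruction of the natural ODE approach. The profile equation
\[
\theta'(s) \;=\; \cos\theta(s) \;-\; (n-1)\,\frac{\sin\theta(s)}{r(s)}
\]
is exactly right: it reduces to the graphical rotationally symmetric translater equation $\frac{u''}{1+(u')^2} + \frac{n-1}{r}u' = 1$ upon substituting $u'=\tan\theta$ and $\frac{dr}{ds}=\cos\theta$, it specializes to the grim-reaper ODE $\theta'=\cos\theta$ when $n=1$, and the asymptotics $\theta\sim r/n$ at the bowl apex check out. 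Your bookkeeping of $\theta(0)=-\pi/2$ at the neck, the monotonicity of $\theta$ on $(0,s^*)$, and the identification $R^*=r(s^*)$ at the first horizontal tangent are all handled correctly, and the integration
\[
R^*-R=\int_0^{s^*}\cos\theta\,ds\le\int_0^{s^*}\theta'\,ds=\frac{\pi}{2}
\]
is the whole content. Two small remarks. First, the finiteness of $s^*$ does not really need to be imported from the ``known asymptotic structure'': the same inequality $\theta'\ge\cos\theta$ you use for the comparison already forces $\theta$ to reach $0$ in finite arclength, since the pure grim-reaper flow $\theta'=\cos\theta$ starting at $-\pi/2$ does so; what you do need from \cite{CSS} is the existence of $W_R$ as a complete rotationally symmetric profile with a single neck. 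Second, you write the soliton equation as $H=\langle\nu,e_{n+1}\rangle$ while the paper's convention (Equation~\eqref{translater_equation} with $\mcv=-H\nu$) gives the opposite sign; this is harmless since the resulting profile ODE is orientation-independent, but it is worth stating the normal you use, as you yourself flag. Your closing observation that the rotational term has the favorable sign on $(0,s^*)$ and hence the inequality is in fact strict is a nice sharpening of the stated lemma.
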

\begin{proof}
The proof of this lemma is contained in the proof of Lemma 2.1 in \cite{niels}. 
\end{proof}


\begin{proof}[Proof of the $2$-dimensional version of Theorem \ref{convex_hull_noncomp_trans}]  Let $\Sigma^2 \subseteq \R^3$ be a properly immersed self-translater with compact boundary $\partial \Sigma$. 
In the theorem,  let us assume that the Cases $(1)$, $(4)$ and $(5)$ do not occur. We want to show that then Case $(2)$ or Case $(3)$ must occur. 
Let $H_1$ and $H_2$ be two closed halfspaces (here: halfplanes) in $\R^2$ such that $\conv (\pi( \Sigma)) \subseteq H_1 \cap H_2$. Let $P_1 \coloneqq \partial H_1$ and $P_2 \coloneqq \partial H_2$. In order to show that case $(2)$ or case $(3)$ must occur, it is sufficient to show that the lines $P_1$ and $P_2$ are parallel.

Let us assume by contradiction that $P_1$ and $P_2$ are not parallel. 
The idea is to show that  $\Sigma$ must be then contained in a halfspace of the kind $\{x_{3} \le K \}$ for $K$ large enough. This will contradict Lemma \ref{lemma_ceiling}.

Let us consider $\tilde{H}_1 \coloneqq \pi^{-1} \lef H_1 \rig = H_1 \times \R$ and $\tilde{H}_2 \coloneqq \pi^{-1} \lef H_2 \rig = H_2 \times \R$. Note that $\tilde{H}_1$ and $\tilde{H}_2$ are closed halfspaces of $\R^{3}$ and $\Sigma \subseteq \tilde{H}_1 \cap \tilde{H}_2$. Moreover we will denote  $\tilde{P}_1 \coloneqq \pi^{-1}\lef P_1\rig = P_1 \times \R$ and $\tilde{P}_1 \coloneqq \pi^{-1}\lef P_1\rig = P_1 \times \R$. Note that $\tilde{P}_1$ and $\tilde{P}_1$ are affine planes in $\R^{3}$, both parallel to the $x_{3}$-axis. Without loss of generality, we may assume that   $\tilde{P}_1 \cap \tilde{P}_2$ is the $x_{3}$-axis.

From Lemma \ref{tangency_principle}, since $\tilde{P}_1$  and  $\tilde{P}_2$ are both self-translaters, $\Sigma$ does not have any interior point in common with them, i.e.  $\lef \Sigma \setminus \partial \Sigma\rig  \cap \lef \tilde{P}_1 \cup \tilde{P}_2\rig = \emptyset$. 
%
%
For every $R>0$, let $S_R \subseteq H_1 \cap H_2 \subseteq \R^2$ be the unique circle of radius $R>0$ and tangent to $P_1$ and $P_2$ and let $p_R \in H_1 \cap H_2$ be the center of $S_R$. Moreover let $\bar{B}_R(p_R)$ be the closed ball of center $p_R$ and radius $R>0$. Observe that since $S_R$ is tangent to $P_1$ and $P_2$, $\lef H_1 \cap H_2\rig \setminus \bar{B}_R$ consists of two connected regions, one bounded and the other one unbounded.  Let us denote by $A_R$ the the closure of the bounded region. Observe that 
$$
\lim_{R \searrow 0} \diam A_R = 0.
$$

 For each $R>0$, let  $W_{R}$ be the wing-like self-translater such that it is rotationally symmetric around $\{p_R\} \times \R$ and $\min_{W_{R}} x_{3} = 0$ and $R>0$ is the aperture of the ``hole''. Moreover, let $R^*$ be the radius as in Lemma \ref{auxiliary_lemma}, i.e. $x_{3} = 0$ on the circle $S_{R^*}(p_R)$ of radius $R^*$ and centered in $p_R$.
$$
\tilde{W}_{R} \coloneqq W_{R} \cap \lef A_R \times \R \rig.
$$
It is easy to check that $\tilde{W}_{R} \subseteq \tilde{H}_1 \cap \tilde{H}_2 $ is compact and $\partial \tilde{W}_{R} \subseteq \tilde{P}_1 \cup \tilde{P}_2$.

Since $\partial \Sigma $ is compact, up to a translation in the $x_{3}$-direction, we can assume $\partial \Sigma \subseteq \{x_{3} \le - 1 \}$.

Moreover, since $\Sigma$ is properly immersed, we have that there exists $r>0$ small enough, such that
$$
\tilde{W}_{r} \cap \Sigma = \emptyset.
$$
Consider the $1$-parameter family $\{\tilde{W}_{R}\}_{R>0}$. Using Lemma \ref{tangency_principle} and a standard argument, we have that $\tilde{W}_{R} \cap \Sigma = \emptyset $ for every $R >0$. 

From Lemma \ref{auxiliary_lemma}, we have that $S_{R^*}(p_R) \cap A_R \ne \emptyset$,  for every $R>0$ such that $\dist(p_R, 0) > \frac{\pi}{2}$. Moreover the family of compact sets $\{S_{R^*}(p_R) \cap A_R \}_{R>0}$ swipes out the whole plane $\R^2 \times \{0\}$, i.e.
$$
\bigcup_{R>0} S_{R^*}(p_R) \cap A_R = \R^2 \times \{0\}.
$$
Therefore we have that 
\begin{equation}\label{1.8.18}
\Sigma \subseteq \{x_{3} \le 0\}.
\end{equation}

Recall that $\Sigma$ is not compact, because we are assuming that $(1), (4)$ and $(5)$ do not hold. This generates a contradiction because from \eqref{1.8.18} and from Lemma \ref{lemma_ceiling}, we have that $\Sigma$ must be compact.

Therefore we showed that if $(1), (4)$ and $(5)$ do not hold, then $(2)$ or $(3)$ must occur. 
\end{proof}

Observe that the above proof is quite similar to the proof in \cite{hoffman-meeks}, but it works only for $n=2$. Indeed note that it is not possible to naively generalize the above proof to higher dimension. The problem is that it is not possible to define the set $A_R$. Indeed let us assume that $n \ge 3$ and let $H_1$ and $H_2$ be halfspaces of $\R^n$ as in the proof above, and let $P_1$ and $P_2$ be their boundaries respectively. Then let $B$ a closed ball such that $S = \partial B$ is tangent both to $P_1$ and to $P_2$ and such that $B \subseteq H_1 \cap H_2$. Then $\lef H_1 \cap H_2 \rig \setminus B $ is connected. Therefore the argument of the proof above does not work.   

However, with a straightforward generalization of the argument above, one can prove a weaker version of Theorem \ref{bi-halfspace_boundary}. More precisely, one can prove the following result.

\begin{theorem} Let $(\Sigma^n, \partial \Sigma)$ be a properly immersed  connected self-translating n-dimensional hypersurface  in $\R^{n+1}$. Let $\mathcal{C} \subseteq \R^n$ be a half-cone, i.e. 
$$
\mathcal{C} = \{ x \in \R^n \colon \mathrm{angle}(x , w) < \alpha \}
$$
for some $w \in \mathbb{S}^{n-1}$ and some angle $\alpha \in (0, \frac{\pi}{2})$.

Then if $\Sigma^n \subseteq \mathcal{C} \times \R$ it must be compact.
\end{theorem}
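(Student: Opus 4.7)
The plan is to deduce the theorem as a direct corollary of Theorem \ref{bi-halfspace_boundary}. The key observation is that, because $\alpha < \pi/2$, the rotationally symmetric half-cone $\mathcal{C}$ fits inside the intersection of a (non-unique) pair of transverse halfspaces of $\R^n$; vertically extending that pair then produces two transverse vertical halfspaces of $\R^{n+1}$ containing $\Sigma$, at which point Theorem \ref{bi-halfspace_boundary} applies directly.

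Concretely, I would pick any unit vector $u \in \R^n$ perpendicular to $w$ and set $v_1 \coloneqq \sin\alpha \cdot w + \cos\alpha \cdot u$ and $v_2 \coloneqq \sin\alpha \cdot w - \cos\alpha \cdot u$. A short Cauchy-Schwarz computation using the defining inequality $\langle x, w \rangle \geq \|x\| \cos\alpha$ of $\overline{\mathcal{C}}$ gives $\langle x, v_i\rangle \geq 0$ on $\overline{\mathcal{C}}$, with equality only on the respective walls of $\mathcal{C}$. Since $\langle v_1, v_2\rangle = -\cos(2\alpha) \in (-1, 1)$ for $\alpha \in (0, \pi/2)$, the vectors $v_1, v_2$ are not parallel, so the halfspaces $H_i \coloneqq \{x \in \R^n : \langle x, v_i\rangle \geq 0\}$ are transverse in $\R^n$ and contain $\mathcal{C}$. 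Lifting to $\tilde H_i \coloneqq H_i \times \R$ one has $\Sigma \subseteq \mathcal{C} \times \R \subseteq \tilde H_1 \cap \tilde H_2$, and Theorem \ref{bi-halfspace_boundary} then yields the compactness of $\Sigma$.

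No substantial obstacle is anticipated in this short argument, since everything beyond Theorem \ref{bi-halfspace_boundary} is the elementary linear-algebra remark that a strictly acute rotational half-cone has two transverse supporting halfspaces. It is worth noting, however, that a genuine generalization of the $n=2$ winglike barrier proof to higher dimensions would run into exactly the obstruction already emphasized above: the bounded cap $A_R$ there hinges on a tangent ball disconnecting the ambient wedge, which fails in $\R^n$ for $n \geq 3$ since a ball tangent to $\partial \mathcal{C}$ along an $(n-2)$-sphere can be bypassed through the annular cross-section that sits between the ball and the cone walls, so no compact winglike barrier $\tilde W_R$ with boundary on self-translating walls is readily available.
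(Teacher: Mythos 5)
Your derivation from Theorem \ref{bi-halfspace_boundary} is correct. Writing $x^\perp$ for the component of $x$ orthogonal to $w$, the cone condition gives $\langle x, w\rangle \geq \|x\|\cos\alpha$ and hence $\|x^\perp\| \leq \|x\|\sin\alpha$ on $\overline{\mathcal{C}}$, so with $v_i = \sin\alpha\, w \pm \cos\alpha\, u$ (any fixed unit $u \perp w$) one gets
\[
\langle x, v_i\rangle \geq \sin\alpha\,\|x\|\cos\alpha - \cos\alpha\,\|x^\perp\| \geq 0 \quad \text{on } \overline{\mathcal{C}},
\]
and $\langle v_1, v_2\rangle = -\cos 2\alpha \in (-1,1)$ makes the two halfspaces transverse. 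Lifting them vertically and invoking Theorem \ref{bi-halfspace_boundary} gives the result. (You are tacitly using that $\partial\Sigma$ is compact; that hypothesis is required by Theorem \ref{bi-halfspace_boundary} and should also appear in the statement.)

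However, this is not the proof the paper intends. The sentence immediately before the theorem says the result follows by ``a straightforward generalization of the argument above,'' meaning the $n=2$ winglike-barrier sweeping argument. The whole purpose of stating this result in the Appendix is to show that the barrier method, which fails to give the full bi-halfspace theorem for $n \geq 3$ (a ball tangent to both walls does not disconnect a wedge), still captures the cone case. As you yourself note, the cone theorem is a strictly weaker corollary of Theorem \ref{bi-halfspace_boundary}; deducing it that way is valid but circumvents the comparison the paper is drawing between the Omori--Yau method and the barrier method.

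Your final remark is incorrect and would, if true, undercut the paper's own claim. The ``annular bypass'' obstruction you describe is real for a ball inside a \emph{wedge} $H_1 \cap H_2 \subseteq \R^n$, $n \geq 3$, but it does \emph{not} occur for a ball centered on the axis of the rotationally symmetric cone $\mathcal{C}$. If $B$ has center $tw$ and radius $t\sin\alpha$, it is tangent to $\partial\mathcal{C}$ along the $(n-2)$-sphere at height $z_0 = t\cos^2\alpha$, where the ball's cross-sectional radius $\sqrt{(t\sin\alpha)^2 - (t-z)^2}$ and the cone's cross-sectional radius $z\tan\alpha$ coincide (both equal $t\sin\alpha\cos\alpha$) and are tangent as functions of $z$, the sphere's concave profile lying below the cone's linear one. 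Hence the closed ball fills the entire cone cross-section at height $z_0$, the annulus degenerates there, and $\overline{\mathcal{C}}\setminus B$ is disconnected into a bounded cap near the apex and an unbounded part, exactly as in dimension $2$. This is precisely what permits building the compact barrier $\tilde{W}_R := W_R \cap (A_R \times \R)$ with boundary in $\partial\mathcal{C}\times\R$ and running the sweeping argument in every dimension; the rotational symmetry of the cone is what saves the barrier proof where the wedge version fails.
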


\begin{remark}
The proof of Hoffman and Meeks works in any dimension because they used as barriers  solutions of a Dirichlet problem for the minimal hypersurface equation. 

Indeed it is known that for every  bounded, convex, $C^2$ domain $\Omega \subseteq \R^n$, and for every $\varphi \in C^0\lef \partial \Omega  \rig$ there exist a solution $u \in C^2\lef \Omega \rig \cap C^0\lef \bar{\Omega} \rig$ of the following Dirichlet problem. 
\begin{equation}
\begin{cases}
\di \lef \frac{D u }{\sqrt{1 + |Du|^2}} \rig = 0 \qquad &\text{in } \Omega  \\
u|_{\partial \Omega} = \varphi  \qquad &\text{on } \partial \Omega.
\end{cases}
\end{equation}
For more details, see Section 16.3 in  \cite{gt}.

In our case we would have needed to solve a Dirichlet problem of the kind \eqref{side_trans_dirichlet}. Indeed it is easy to verify that a self-translater which is graphical w.r.t. a direction orthogonal to the moving direction $e_{n+1}$ is the graph of a function satisfying the PDE below in \eqref{side_trans_dirichlet}. Unfortunately in this case there is no general existence result, even assuming the initial data to be smooth. See Proposition \ref{prop_counter_example_dirichlet} below. Therefore we firstly resorted to building barriers carefully from the known family of wing-like self-translaters, the drawback being that this procedure only works in the case $n=2$, as we already explained. This motivated us to look for a different approach and led us to the proof of the ``Bi-Halfspace'' Theorems \ref{bi-halfspace}--\ref{bi-halfspace_boundary} and consequently to the proof of Theorem \ref{convex_hull_noncomp_trans}, as presented in the main parts (see Section \ref{general_proof}) of this paper.
\end{remark}
\begin{proposition}\label{prop_counter_example_dirichlet}
There exists $\Omega \subseteq \R^n$ bounded, convex with smooth boundary $\partial \Omega$ and there exists $\varphi \in C^{\infty}\lef \partial \Omega \rig$ such that there exists no function $u \in C^2\lef \Omega \rig \cap C\lef \bar{\Omega} \rig$, $u = u(y_1, \dots, y_n)$, satisfying the following Dirichlet problem.
\begin{equation}\label{side_trans_dirichlet}
\begin{cases}
\di\lef \frac{Du}{\sqrt{1 + |Du|^2}} \rig = \frac{u_{y_1}}{\sqrt{1 + |Du|^2}} \qquad &\text{ in } \Omega \\
u|_{\partial \Omega} = \varphi \qquad & \text{ on } \partial \Omega
\end{cases}
\end{equation}
\end{proposition}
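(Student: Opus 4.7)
The plan is to produce a counterexample in dimension $n = 1$, which is the simplest setting and already suffices to prove the proposition as stated (it only requires existence of some $n$, $\Omega \subseteq \R^n$ and $\varphi$). Take $\Omega = (-L, L) \subset \R$ and $\varphi \in C^\infty(\{-L, L\})$ with $\varphi(L) - \varphi(-L) \ge \pi/2$, for instance $\varphi(-L) = 0$ and $\varphi(L) = \pi/2$. In this dimension the PDE \eqref{side_trans_dirichlet} becomes the ODE
\[
\frac{u''}{1+(u')^2} = u', \qquad \text{i.e.,} \qquad u'' = u'\bigl(1+(u')^2\bigr),
\]
and the strategy is to classify all $C^2$ solutions on $(-L, L)$ that are continuous on $[-L, L]$ and then show each such solution has oscillation strictly less than $\pi/2$.

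I would first observe by the standard ODE uniqueness theorem (applied to the smooth system $(u, u')' = (u', u'(1+(u')^2))$) that if $u'(x_0) = 0$ at any interior point, then $u$ must agree with the constant $u(x_0)$ on $(-L, L)$; this is incompatible with the hypothesis $\varphi(L) \neq \varphi(-L)$. Hence $u'$ has constant sign on $(-L, L)$, and substituting $p = u'$ reduces the ODE to the separable $p' = p(1+p^2)$. Partial fractions integrate this to $\tfrac{1}{2} \log\bigl(p^2/(1+p^2)\bigr) = x + c$, and a second integration (using the substitution $v = Ce^x$) yields the explicit family
\[
u(x) = \pm\arcsin(Ce^x) + c_0, \qquad 0 < C \le e^{-L}, \; c_0 \in \R.
\]
A direct computation then gives
\[
|u(L) - u(-L)| = \arcsin(Ce^L) - \arcsin(Ce^{-L}) \le \arcsin(1) - \arcsin(Ce^{-L}) < \tfrac{\pi}{2},
\]
with strict inequality since $Ce^{-L} > 0$. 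This contradicts the hypothesis on $\varphi$ and completes the proof in dimension $n = 1$.

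For dimensions $n \ge 2$ the same philosophy can be pushed through: take $\Omega$ to be a smooth convex approximation of the thin slab $(-L, L) \times (-\epsilon, \epsilon)^{n-1}$ for small $\epsilon > 0$, and $\varphi$ essentially depending only on $y_1$ with oscillation at least $\pi/2$ between the two caps. The explicit one-dimensional solutions $u_C(y) := \arcsin(Ce^{y_1}) + c_0$, viewed as functions on $\Omega$ that are constant in $y_2, \dots, y_n$, remain solutions of \eqref{side_trans_dirichlet}, and their graphs in $\R^{n+1}$ are self-translaters. The plan is then to use the family $\{u_C\}$ as barriers via the Principle of Separating Tangency (Lemma \ref{tangency_principle}), sandwiching any candidate solution from above and below along the central segment $[-L, L] \times \{0\}^{n-1}$ and thereby propagating the $\pi/2$ oscillation bound to the higher-dimensional setting. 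The main obstacle in this extension is that the PDE does not collapse to the one-dimensional ODE along a line, so making the barrier sandwich rigorous requires carefully matching the comparison family members to $\varphi$ on $\partial\Omega$, invoking Lemma \ref{tangency_principle} to move the comparison into the interior, and likely a compactness argument as $\epsilon \to 0$ to recover the sharp one-dimensional estimate on the axis.
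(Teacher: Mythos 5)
Your $n=1$ ODE computation is correct: the non-constant $C^2$ solutions of $u''=u'(1+(u')^2)$ on $(-L,L)$ are $\pm\arcsin(Ce^{y_1})+c_0$, and all of them have oscillation strictly below $\pi/2$, so endpoint data with a $\pi/2$ gap rules out solvability. But this only settles the degenerate case $n=1$, where $\partial\Omega$ is a pair of points. The paper tacitly assumes $n\geq 2$ from Section \ref{sec:bi-halfspace} onward, and this proposition is there precisely to explain why the Hoffman-Meeks barrier approach on genuine convex domains in $\R^n$, $n\geq 2$, cannot be transplanted; so the relevant content is exactly the part you only sketch. For $n\geq 2$ the steps you yourself flag as missing (matching the one-variable comparison family to $\varphi$ along a curved boundary, manufacturing an interior tangent point for Lemma~\ref{tangency_principle}, a compactness argument as the slab collapses) are not routine: your cylindrical barriers $u_C$ are insensitive to $y_2,\dots,y_n$ and so do not naturally dominate a general $\varphi$ on a boundary that bends in those directions, and no actual contradiction is produced. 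As written, the $n\geq 2$ portion is an outline of a strategy rather than a proof.

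The paper's own argument is of a different flavor and handles all $n\geq 2$ uniformly, with no barrier matching. One slices the bowl soliton $U\subseteq\R^{n+1}$ by an affine hyperplane $P$ (neither parallel nor orthogonal to $e_{n+1}$) chosen so that $\Gamma:=U\cap P$ bounds a compact cap $U_\Gamma\subseteq U$ that is \emph{not} graphical over a vertical hyperplane $Q$; since $P$ is graphical over $Q$, projecting $\Gamma$ to $Q$ yields a smooth bounded convex $\Omega$ and a smooth boundary datum $\varphi$. If a solution $u$ of \eqref{side_trans_dirichlet} existed, $\graph(u)$ would be a compact translater spanning $\Gamma$; sweeping the foliation $\{U+te_{n+1}\}_{t\in\R}$ of $\R^{n+1}$ through it and invoking Lemma~\ref{tangency_principle} at the first and last contact times forces $t_{\min}=t_{\max}=0$, hence $\graph(u)=U_\Gamma$, contradicting the non-graphicality of $U_\Gamma$. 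In short, the paper chooses $\varphi$ to be the trace of an explicit non-graphical translater, so non-existence falls out of the foliation property of the bowls together with separating tangency, rather than from an oscillation estimate.
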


%
\begin{figure}
\centering
\begin{tikzpicture}[ scale = 0.45]
      \draw[thick, scale=1,domain=-3:3,smooth,variable=\y,]  plot ({\y*\y},{\y});
       \draw[ scale=1,domain=-3:3,smooth,variable=\y,]  plot ({\y*\y +8},{\y});
             \draw[ yscale=2] (17,0) circle (1.5);
       \draw[ scale=1,domain=-3:3,smooth,variable=\y,]  plot ({\y*\y - 8},{\y});
      \draw[ yscale=2] (1,0) circle (1.5);
      \draw[thick, yscale=2] (9,0) circle (1.5);
      \draw[thick, rotate around={45:(5,0)},red] (5,-0.63) ellipse (20pt and 89pt);
     \draw [thick] (3.17, 1.65)  -- (3.17, -5);
      \draw [thick ] (7.72, -2.5)  -- (7.72, -5);
       \draw[thick, red, xscale=2, yscale=0.5, fill=gray!20] (2.73,-10) circle (1.13);
     \draw[thick] (-2, -6) -- (10, -6) -- ( 12, -4) -- (0,-4) -- (-2,-6);
	\draw[thick, ->] (14, -5) -- (16, -5) ;
\draw [] ( 2, -5 ) node [anchor = center]{$\Omega$};  
\draw [] ( 5, 3) node [anchor = center]{$U_0$}; 
\draw [] (13, 3) node [anchor = center]{$U_t$}; 
\draw [] (-3, 3) node [anchor = center]{$U_t$}; 
\draw [] ( 15.5, -5 ) node [anchor = north]{$e_{n+1}$};  
\draw [] ( -3, -5 ) node [anchor = north]{$Q$};  
\draw [] ( 6, 0) node [anchor = south]{$\Gamma$};  
\end{tikzpicture}
\caption{}
\label{counter_example_dirichlet}
\end{figure}

\begin{proof}
Let $U \subseteq \R^{n+1}$ be the bowl self-translater. Let $P$ be an affine hyperplane of $\R^{n+1}$ such that it is not parallel to $e_{n+1}$ but not orthogonal to $e_{n+1}$. Let $Q$ be another hyperplane parallel to $e_{n+1}$ and such that $P$ is graphical over $Q$. 

Let $\Gamma \coloneqq U \cap P$. Observe that, up to translating $P$ in the direction of $e_{n+1}$, we can assume $\Gamma \ne \emptyset$. Moreover, we can take $P$ such  that $\Gamma = \partial U_\Gamma$, where $U_\Gamma \subseteq U$ is a bounded subset of $U$ which is not graphical over $Q$. 

Let $\pi_Q \colon \R^{n+1} \to Q$ be the orthogonal projection onto $Q$. 

Since $U$ is a convex hypersurface, we have that $\pi \lef \Gamma \rig $ is the boundary of some bounded convex domain $\Omega \subseteq Q$ (see Figure \ref{counter_example_dirichlet}). Since $P$ is graphical over $Q$, we have that $\Gamma$ is the graph of some function $\phi \colon \partial \Omega \to \R$.  

Let $y_1, \dots, y_n$ be Cartesian coordinates on $Q$ such that the coordinate $y_1$ coincides with $ x_{n+1}$.  

Now assume by contradiction that there exists a solution $u$ for  the Dirichlet problem  \eqref{side_trans_dirichlet}.

Therefore $\graph\lef u \rig $ is a compact self-translater with unit velocity $e_{n+1}$ with boundary $\Gamma$. 

Now for every $t \in \R$ define $U_t \coloneqq U + t e_{n+1}$. Observe that the family $\{U_t\}_{t\in \R}$ foliates $\R^{n+1}$.

Since $\graph \lef u \rig$ is compact and each $U_t$ is properly immersed, there exist
$$
t_{\min} \coloneqq \min \{ t \in \R \colon U_t \cap \graph \lef u \rig \ne \emptyset \}
$$
and
$$
t_{\max} \coloneqq \max \{ t \in \R \colon U_t \cap \graph \lef u \rig \ne \emptyset \}.
$$

If $t_{\min} < 0$, then every point $p \in  U_{t_{\min}} \cap \graph \lef u \rig$ would be an interior point of $\graph\lef u\rig$. From Lemma \ref{tangency_principle}, we would have that $\graph\lef u\rig \subseteq U_{t_{\min}}$, and therefore $\Gamma =\partial \lef \graph\lef u\rig\rig \subseteq U_{t_{\min}}$. But this is a contradiction because $\Gamma \subseteq U_0 = U$. Therefore $t_{\min} = 0$. 

With a similar argument one can show that  $t_{\max} = 0$. Therefore $\graph\lef u\rig = U_\Gamma \subseteq U_0$. But this is a contradiction, because $U_\Gamma$ is not graphical by construction.

\end{proof}

\end{document}